\documentclass[12pt]{amsart}

 \usepackage[margin=1in]{geometry}





\newcommand{\private}[1]{}
\oddsidemargin=0in
\evensidemargin=0in










\usepackage{amssymb, amsmath, amscd, amsthm, color, epsfig,url, graphicx, url}

\usepackage[all]{xy}          
\xyoption{dvips}              

\setcounter{tocdepth}{2}

\makeatletter
\renewcommand\l@subsection{\@tocline{2}{0pt}{2pc}{5pc}{}}
\makeatother










\newcommand{\R}{{\mathbb R}}
\newcommand{\Z}{{\mathbb Z}}

\newcommand{\hofiber}{\operatorname{hofiber}}

\newcommand{\holim}{\operatorname{holim}}
\newcommand{\colim}{\operatorname{colim}}

\newcommand{\tfiber}{\operatorname{tfiber}}
\newcommand{\tkernel}{\operatorname{tkernel}}
\newcommand{\tcokernel}{\operatorname{tcokernel}}

\newcommand{\Emb}{\operatorname{Emb}}
\newcommand{\Ebar}{\overline{\Emb}}

\newcommand{\Imm}{\operatorname{Imm}}

\newcommand{\rImm}{\operatorname{rImm}}

\newcommand{\rbar}{\overline{\rImm}}

\newcommand{\Ho}{\operatorname{H}}

\newcommand{\coker}{\operatorname{coker}}

\newcommand{\Conf}{\operatorname{Conf}}
\newcommand{\rConf}{\operatorname{rConf}}
\newcommand{\codim}{\operatorname{codim}}

\newcommand{\HZ}{\underline{H}\Z}

\newcommand{\calC}{{\mathcal{C}}}
\newcommand{\calO}{{\mathcal{O}}}
\newcommand{\calP}{{\mathcal{P}}}
\newcommand{\calQ}{{\mathcal{Q}}}

\newcommand{\calA}{{\mathcal{A}}}

\newcommand{\Top}{\operatorname{Top}}



\theoremstyle{plain}
\newtheorem{thm}{Theorem}[section]
\newtheorem{prop}[thm]{Proposition}
\newtheorem{lemma}[thm]{Lemma}
\newtheorem{cor}[thm]{Corollary}

\theoremstyle{definition}
\newtheorem{defin}[thm]{Definition}
\newtheorem{example}[thm]{Example}
\newtheorem{def/ex}[thm]{Definition/Example}

\theoremstyle{remark}
\newtheorem{rem}[thm]{Remark}
\newtheorem{rems}[thm]{Remarks}

\usepackage{tikz}

\makeatletter
\@namedef{subjclassname@2020}{%
  \textup{2020} Mathematics Subject Classification}
\makeatother

\usepackage{graphicx}

\begin{document}
\pagestyle{plain}
\title{Intrinsic convergence of the homological Taylor tower for $r$-immersions in $\mathbb R^n$}

\author{Gregory Arone}
\address{Gregory Arone\newline Department of Mathematics, Stockholm University}
\email{gregory.arone@math.su.se}

\author{Franjo \v Sar\v cevi\'c}
\address{Franjo \v Sar\v cevi\'c \newline Department of Mathematics, University of Sarajevo}
\email{franjo.sarcevic@pmf.unsa.ba}
\urladdr{pmf.unsa.ba/franjos}

\subjclass[2020]{Primary: 18F50; Secondary: 57R42, 57R40, 55R80, 55P42}
\keywords{calculus of functors, manifold calculus, Taylor tower, embeddings, immersions, $r$-immersions, homotopy of spectra, homological convergence, partial configuration space}
\thanks{ F. \v Sar\v cevi\'c was partially supported by the grant P20\_01109 (JUNTA/FEDER, UE). He is also grateful to Ismar Volić for all his support.\\The final version of this paper is published in Homology, Homotopy and Applications, Volume 26 (2024) Number 2, pp. 163-192. https://dx.doi.org/10.4310/HHA.2024.v26.n2.a8} 

\begin{abstract}
For an integer $r\ge 2$, the space of $r$-immersions of $M$ in $\R^n$ is defined to be the space of immersions of $M$ in $\R^n$ such that at most $r-1$ points of $M$ are mapped to the same point in $\R^n$. The space of $r$-immersions lies ``between" the embeddings and the immersions. We calculate the connectivity of the layers in the homological Taylor tower for the space of $r$-immersions in $\mathbb R^n$ (modulo immersions), and give conditions that guarantee that the connectivity of the maps in the tower approaches infinity as one goes up the tower. We also compare the homological tower with the homotopical tower, and show that up to degree $2r-1$ there is a ``Hurewicz isomorphism" between the first non-trivial homotopy groups of the layers of the two towers.
\end{abstract}

\maketitle

\tableofcontents


\parskip=5pt
\parindent=0cm




\begin{section}{Introduction}
Let $M$ be a smooth manifold of dimension $m$, and fix an integer $r\ge 2$.  An \textit{$r$-immersion} of $M$ in $\R^n$ is an immersion of $M$ in $\R^n$ such that the preimage of every point in $\R^n$ contains at most $r-1$ points of $M$. The space of $r$-immersions of $M$ in $\R^n$ is denoted by $\rImm(M, \R^n)$. For $r=2$, $2$-immersions are the same thing as injective immersions, which are essentially the same as embeddings in nice cases. In any case, we have inclusions of subspaces
\[
\Emb(M, \R^n)\subseteq 2\Imm(M, \R^n)\subset 3\Imm(M, \R^n)\subset \cdots\subset \rImm(M, \R^n)\subset \cdots \subset \Imm(M, \R^n).
\]
In this paper we study the {\it homological Taylor tower} of the $r$-immersions functor. The ``Taylor tower'' is meant in the sense of manifold calculus (also known as embedding calculus) developed by Weiss \cite{W:EI1} and Goodwillie-Weiss \cite{GW:EI2}.

The basic idea of manifold calculus is the following. In order to study the homotopy type of a space such as $\rImm(M, \R^n)$, one views it as a particular value of the presheaf $\rImm( -, \R^n)$ defined on $M$ (one can also consider more general target manifolds than $\R^n$, but we will content ourselves with maps into $\R^n$). A presheaf is a contravariant functor on the poset $\mathcal{O}(M)$ of open subsets of $M$. Inside $\mathcal{O}(M)$ there is a sequence of subposets $\mathcal{O}_1(M)\subset \cdots \mathcal{O}_k(M)\subset \cdots \subset \mathcal{O}_\infty(M)$, where $\mathcal{O}_k(M)$ is the poset of open subsets of $M$ that are diffeomorphic to the disjoint union of at most $k$ copies of $\R^m$. By restricting a presheaf $F$ to $\mathcal{O}_k(M)$ and then extrapolating back to $\mathcal{O}(M)$ one obtains a tower of approximations to $F$, which is usually denoted as follows
\[
F\to (T_\infty F \to \cdots \to T_kF \to T_{k-1}F \to \cdots T_0F).
\]
This is called the ``Taylor tower'' of $F$. Manifold calculus, and the Taylor tower in particular, has had many consequences and applications \cite{Mun05}, \cite{Vol06}, \cite{ALV07}, \cite{Mun11}, \cite{DwyerHess:LongKnots}, \cite{ST16}, \cite{BdBW18}. 

In this paper we investigate the Taylor tower that calculates the {\it homology} of the space $\rImm(M, \R^n)$. In practice, this means the following. First of all, it is convenient to replace the space of $r$-immersions with $r$-immersions modulo immersions. Let us suppose that we fix a basepoint in $\Imm(M, \R^n)$, and let $\rbar(M, \R^n)$ be the homotopy fiber of the inclusion map $\rImm(M, \R^n)\to \Imm(M, \R^n)$. Let $\HZ$ denote the Eilenberg-MacLane spectrum. We are interested in the Taylor tower of the presheaf of Spectra, defined by the formula
\[
U \mapsto  \HZ\wedge  \rbar(U, \R^n).
\] 
(more precise definitions are given in Section~\ref{S:Prereq}). 

Our main result concerns the rate of convergence of the Taylor tower of this functor. The question of convergence is a fundamental one. We will distinguish between two aspects of convergence: how rapidly the tower converges to its limit, and what it converges to.  
We will say that the Taylor tower of a functor $F$ 
\begin{enumerate}
\item {\it converges} at $M$ if the map $F(M)\to \holim_k T_kF(M)$ is an equivalence.
\item {\it converges intrinsically} at $M$ if the connectivity of the map $T_kF(M)\to T_{k-1}F(M)$ approaches $\infty$ as $k$ approaches $\infty$. 
\item {\it converges strongly} at $M$ if the connectivity of the map $F(M)\to T_kF(M)$ approaches $\infty$ as $k$ approaches $\infty$. 
\end{enumerate}
It is clear that (1)+(2)$\iff$(3). In particular, strong convergence implies intrinsic convergence, but the converse does not have to be true. In practice it seems that for ``natural'' functors that we know, whenever the Taylor tower of $F$ converges intrinsically, it converges strongly to $F$. But intrinsic convergence is usually much easier to prove than strong convergence.

Before we state our main result, let us recall, for context, that one of the deepest results in functor calculus is the Goodwillie-Klein-Weiss convergence theorem \cite{GW:EI2}, \cite{GK:EmbDisj}, \cite{GK}.

\begin{thm}[Convergence of the Taylor tower for spaces of embeddings]\label{T:GKW-Convergence}
If $M$ is a smooth closed manifold of dimension $m$, and $N$ is a smooth manifold of dimension $n$, then the map $$\Emb(M,N) \to T_k\Emb(M,N)$$ is $$k(n-m-2)+1-m\text{-connected}.$$ In particular, if $n-m-2>0$, then the connectivities grow with $k$ and the Taylor tower therefore converges strongly to $\Emb(M,N)$.
\end{thm}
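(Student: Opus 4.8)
The plan is to derive the estimate from Weiss's general convergence machinery for manifold calculus, fed by the Goodwillie--Klein multiple disjunction theorems. Recall that $T_k\Emb(M,N)$ is by construction the homotopy limit $\holim_{\mathcal{O}_k(M)}\Emb(-,N)$ of the presheaf restricted to the poset of open subsets diffeomorphic to the disjoint union of at most $k$ copies of $\R^m$. The general principle (the framework is in \cite{W:EI1}, the relevant analyticity estimates in \cite{GW:EI2}) is that the connectivity of the comparison map $\Emb(M,N)\to T_k\Emb(M,N)$ is controlled by how close $\Emb(-,N)$ is to being polynomial of degree $\le k$, and this in turn is measured by the connectivity of total homotopy fibers of cubical diagrams built from a handle decomposition of $M$. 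Concretely, one fixes a handle decomposition of the closed (hence compact) manifold $M$, and for pairwise disjoint handles $H_0,\dots,H_k$ one considers the $(k+1)$-dimensional cube of restriction maps
\[
S\ \longmapsto\ \Emb\Bigl(M\setminus\bigcup_{i\in S}H_i,\ N\Bigr),\qquad S\subseteq\{0,1,\dots,k\}.
\]
Provided the total homotopy fiber of every such cube is highly connected, with an estimate depending on $n-m$ and the handle indices, an induction over the finitely many handles of $M$ converts this into the desired lower bound for the connectivity of $\Emb(M,N)\to T_k\Emb(M,N)$.

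The crux is therefore to estimate the connectivity of those cubes, which is exactly the content of the multiple disjunction theorem for spaces of embeddings \cite{GK:EmbDisj}, \cite{GK}, and this is by far the hardest ingredient. The base case $k=1$ is the classical Goodwillie--Weiss disjunction estimate: the square with initial vertex $\Emb(M,N)$, intermediate vertices $\Emb(M\setminus H_0,N)$ and $\Emb(M\setminus H_1,N)$, and terminal vertex $\Emb\bigl(M\setminus(H_0\cup H_1),N\bigr)$, is highly cartesian because, given two embeddings agreeing outside $H_0\cup H_1$, the obstruction to isotoping one into the other lives in a space controlled by pushing the image of $H_0$ off that of $H_1$ — a general-position problem in codimension $n-m$, solvable up to connectivity of order $n-m-2$, with a parametrised version yielding the cartesian estimate. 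For general $k$ one argues by a secondary induction on $k$ and on the number of handles, repeatedly applying disjunction so as to make the images of the $k+1$ removed pieces simultaneously disjoint; each ``new direction'' of the cube then contributes of order $n-m-2$ to the connectivity of the total fiber.

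Assembling the pieces, one tracks the numerology through the homotopy Kan extension: the induction over the at most $m$-dimensional handles of $M$ contributes the additive shift $1-m$, each of the $k$ faces past the initial vertex contributes $n-m-2$, and combining these gives that $\Emb(M,N)\to T_k\Emb(M,N)$ is $\bigl(k(n-m-2)+1-m\bigr)$-connected. In particular, once $n-m-2>0$ these connectivities strictly increase with $k$ and tend to $\infty$, so the Taylor tower converges strongly to $\Emb(M,N)$, which is the final assertion.

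The main obstacle is unquestionably the multiple disjunction theorem: its proof is a substantial piece of geometric topology — surgery below the middle dimension, parametrised transversality, and a delicate double induction — and the real subtlety is in obtaining the \emph{simultaneous} disjunction with a connectivity estimate strong enough that $k+1$ directions genuinely accumulate $k+1$ independent copies of the gain $n-m-2$ rather than interfering with one another. A secondary, more bookkeeping-type difficulty is to propagate the cube estimates correctly through the homotopy limit defining $T_k$, which uses the cofinality and polynomial-approximation lemmas of \cite{W:EI1} and \cite{GW:EI2} and requires care with the corrections coming from $\dim M$.
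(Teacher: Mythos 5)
The first thing to say is that the paper does not prove this statement at all: Theorem~\ref{T:GKW-Convergence} is recalled purely for context, as background, and is attributed to Goodwillie--Weiss and Goodwillie--Klein \cite{GW:EI2}, \cite{GK:EmbDisj}, \cite{GK}; no argument for it appears anywhere in the text. So there is no ``paper proof'' to compare yours against, and the relevant question is whether your proposal stands on its own as a proof.

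As a self-contained proof it does not. What you have written is an accurate roadmap of the published proof --- handle decompositions of $M$, cubes of spaces $\Emb(M\setminus\bigcup_{i\in S}H_i,N)$, multiple disjunction as the analytic input, and the polynomial-approximation machinery converting cubewise estimates into the connectivity of $\Emb(M,N)\to T_k\Emb(M,N)$ --- but every load-bearing step is invoked rather than established. The multiple disjunction theorem, i.e.\ the statement that such a $(k+1)$-cube built from handles of index $q_i$ is cartesian to a degree in which the gains of order $n-q_i-2$ genuinely accumulate over the $k+1$ directions, is precisely the content of \cite{GK:EmbDisj} and \cite{GK}, and you explicitly leave it as a black box; the passage from analyticity estimates of this kind to convergence of the tower is the machinery of \cite{W:EI1} and \cite{GW:EI2}, which you also only gesture at; and the final numerology (``each face contributes $n-m-2$, the handle induction contributes $1-m$'') is a plausibility statement rather than a derivation --- the exact excess term $1-m$ depends on the precise form of the disjunction estimates, which you never state. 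As an account of where the theorem comes from, your sketch is essentially correct and matches the references the paper cites; as a proof, the gap is that the entire geometric-topological core (surgery below the middle dimension, the parametrized disjunction induction) is missing. At this scale the honest move --- and what the paper itself does --- is to quote the result and cite \cite{GW:EI2}, \cite{GK:EmbDisj}, \cite{GK}.
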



There is an easier, but also important convergence result for the homological version of the tower, which is more directly relevant to this paper. Define $\Ebar(M, \R^n)$ to be the homotopy fiber of the inclusion $\Emb(M, \R^n)\to \Imm(M, \R^n)$. Consider the contravariant functor from $\mathcal{O}(M)$ to Spectra that sends $U$ to $\underline{H}\mathbb Z\wedge \Ebar(U, \R^n)$. This functor represents the homology of the space of embeddings modulo immersions. The Taylor tower of this functor is known to converge when $n > 2m+1$~\cite{W:HomEmb}.

Now let us state our main result
\begin{thm}\label{thm: convergence} Let $M$ be $m$-dimensional. Assume that $n\ge 2$.
If $r\le n+1$, the Taylor tower for $\HZ \wedge \rbar(M, \R^n)$ converges intrinsically when
\[
n > \frac{rm+1}{r-1}.
\]
If $r\ge n+1$ then the Taylor tower converges intrinsically when $n> m+1$.
\end{thm}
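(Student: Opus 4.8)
The plan is to analyze the layers of the homological Taylor tower directly and estimate their connectivity, then compare successive stages. The $k$-th layer of the Taylor tower for the presheaf $U\mapsto \HZ\wedge\rbar(U,\R^n)$ should be expressible, by the general machinery of manifold calculus, as a section spectrum (a homotopy limit over the $k$-th configuration space of $M$, or more precisely over an appropriate partial configuration space) of a fiberwise spectrum whose fiber is the total homotopy fiber of a $k$-cube built from the values of $\HZ\wedge\rbar$ on disjoint unions of up to $k$ balls. So the first step is to identify this fiber: I would compute the total homotopy fiber of the cube $S\mapsto \HZ\wedge\rbar(\coprod_{i\in S}\R^m,\R^n)$ for $S\subseteq\underline{k}$. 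Since $\rbar$ is the homotopy fiber of $\rImm\to\Imm$ and immersions of a disjoint union of balls satisfy excision (the cube is already "linear" on the immersion side), this total fiber is controlled by the difference between $r$-immersions and immersions of $k$ disjoint balls — i.e. by the $r$-fold partial configuration space $\rConf(k,\R^n)$, or rather its relation to the ordinary configuration space. The key input is the connectivity of the relevant configuration-space-type construction: the homotopy fiber of $\rConf(\coprod\R^m,\R^n)\to\Imm(\coprod\R^m,\R^n)$, which is built out of configurations of points in $\R^n$ with at least one collision of $r$ of them "allowed to appear," and this should be roughly $(n-1)(r-1) - 1$ minus dimension corrections coming from the $m$-dimensional parameter, for the first non-trivial stratum.

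With the fiber's connectivity $c_k$ in hand, the second step is the standard section-space estimate: the $k$-th layer, being the spectrum of sections of a fibration over a base of dimension $\le km$ (the dimension of the $k$-th configuration space of $M$) with fiber of connectivity $c_k$, has connectivity at least $c_k - km$. Then $T_kF(M)\to T_{k-1}F(M)$ has fiber equal to the $k$-th layer, so its connectivity is at least $c_k-km$, and intrinsic convergence is equivalent to $c_k - km\to\infty$. The third step is purely arithmetic: plug in the value of $c_k$. Schematically, for each collision of $r$ points we pay roughly $(n-1)$ in connectivity per "extra" point beyond the first, so with $k$ points grouped to force collisions the leading term in $c_k$ is of order $k\cdot\frac{n-1}{r-1}\cdot$(something), against which we subtract $km$; the threshold $n>\frac{rm+1}{r-1}$, equivalently $(r-1)n - rm - 1 > 0$, is exactly the condition that the coefficient of $k$ in $c_k - km$ is positive. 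For the regime $r\ge n+1$ the $r$-immersion condition is vacuous enough (a point in $\R^n$ can only be hit by at most $n$ sheets of an immersed $m$-manifold in general position when... ) that the relevant configuration space behaves like the one for embeddings, and the bound reduces to the classical $n>m+1$ à la Theorem~\ref{T:GKW-Convergence}'s homological analogue.

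The main obstacle I expect is the precise connectivity estimate for the fiber of the $k$-cube — equivalently, pinning down $c_k$ as an exact function of $k$, $m$, $n$, $r$, including the correct way the $r$-fold collision constraint interacts with the $km$-dimensional base. The subtlety is that the "first non-trivial layer" and the "later layers" may have different-looking connectivity formulas: a collision can only occur once $k\ge r$, so for $k<r$ the layer agrees with an embedding-tower layer, and for $k\ge r$ one must count how many disjoint $r$-fold collisions can be packed among $k$ points and track the resulting linear-in-$k$ growth rate carefully (this is where a partial configuration space $\rConf$ and its stratification by collision patterns enters, and where one uses $r\le n+1$ versus $r\ge n+1$ to decide whether collisions are possible for an immersion in general position at all). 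Secondary technical points — that the homological tower's layers really are section spectra of the expected form, and that smashing with $\HZ$ commutes with the relevant homotopy limits up to a bounded error — are by now standard (cf. \cite{W:HomEmb}) but need to be invoked correctly; and one must be careful that the closedness hypothesis on $M$ (or a suitable handle-dimension bound) is what makes the base-dimension estimate $\le km$ legitimate.
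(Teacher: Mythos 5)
Your outline reproduces the standard reduction that the paper also uses: the map $T_kF(M)\to T_{k-1}F(M)$ for $F=\HZ\wedge\rbar(-,\R^n)$ is controlled by how cartesian the $k$-cube $S\mapsto \HZ\wedge\rbar(\coprod_{\underline{k}\setminus S}D^m,\R^n)$ is, this cube is identified (via the equivalence $\rbar(\coprod_k D^m,\R^n)\simeq\rConf(k,\R^n)$ from the companion paper) with a cube of $r$-configuration spaces, and if that cube is $c_k$-cartesian the map between stages is roughly $c_k-km$-connected, so intrinsic convergence amounts to $c_k-km\to\infty$. Up to here you agree with the paper. But the genuine mathematical content of the theorem is the determination of $c_k$, and your proposal contains no method for it --- you yourself flag it as ``the main obstacle.'' The heuristics you offer in its place are not correct: the leading term of $c_k$ is not ``of order $k\cdot\frac{n-1}{r-1}\cdot(\text{something})$'' but exactly $k(n-1)+\left\lfloor\frac{k}{r}\right\rfloor(r-n-1)$ when $r\le n+1$ (so the per-point rate is $\frac{n(r-1)-1}{r}$), and in the regime $r\ge n+1$ the cube does \emph{not} ``behave like the one for embeddings'': the homological embedding tower converges only for $n>2m+1$, whereas here the bottom class comes from the single-block partition (all $k$ points colliding), giving $c_k=k(n-1)+r-n-1$ and hence the much weaker threshold $n>m+1$. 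So the arithmetic step you describe cannot be carried out from what you have written.

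What fills this gap in the paper is a two-part argument you would need some substitute for. First, the cube of $r$-configuration spaces is \emph{retractive} (explicit sections given by adding a far-away point, with the mixed squares commuting up to homotopy when $n\ge 2$ --- this is where the hypothesis $n\ge 2$ enters), and for retractive cubes the (co)homology of the total fiber is the total kernel/cokernel of the cube of (co)homologies; this replaces your vaguely invoked ``stratification'' analysis and is also why no Blakers--Massey-type estimate is needed. Second, the cohomology of $\rConf(k,\R^n)$ and the restriction maps are computed via the Goresky--MacPherson formula in terms of the order complexes of intervals in the $r$-equal partition lattice $\Pi_{k,r}$, and Bj\"orner--Welker's result that $|\Pi_{k,r}^0|$ is a wedge of spheres with top homology in dimension $k-r-1$ pins down the bottom nonvanishing degree of the total cokernel: one minimizes $k(n-1)+c(x)(r-n-1)$ over partitions $x$ with all blocks of size $\ge r$, taking $c(x)=\lfloor k/r\rfloor$ when $r\le n+1$ and $c(x)=1$ when $r\ge n+1$. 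Without this (or an equivalent computation of the bottom homology of the total fiber of the cube of $\rConf$'s), the threshold $n>\frac{rm+1}{r-1}$ is asserted rather than proved.
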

\begin{rems}\hspace{10cm}
\begin{enumerate}
\item When $r=n+1$ the two statements are equivalent, by an easy calculation. 
\item When $r=2$ we get the condition $n>2m+1$, which is the known condition for the convergence of the Taylor tower of $\HZ\wedge \Ebar(M, \R^n)$.
\item The condition $n > \frac{rm+1}{r-1}$ is equivalent to $rm - (r-1)n < -1$. The number $rm - (r-1)n$ equals, at least when it is positive, to the dimension of the intersection of $r$ copies of $\R^m$ embedded in $\R^n$ in a general position.
\end{enumerate}
\end{rems}

Next let us discuss the proof. Let $F$ be a presheaf defined on a suitable category of $m$-dimensional manifolds and codimension zero embeddings. The basic building blocks in the construction of the Taylor tower of $F$ are spaces of the form $F(\coprod_i \R^m)$, for $i=0,1,2,\ldots$. The homotopy fiber of the map $T_kF \to T_{k-1}F$ depends on the total homotopy fiber of the following cubical diagram, indexed by the poset of subsets of $\underline{k}=\{1, \ldots, k\}$:
\begin{equation}\label{eq:cube}
S\mapsto F\left(\coprod_{\underline{k}\setminus S} \R^m\right)
\end{equation}
This homotopy fiber is sometimes called the $k$-th derivative (or the $k$-th cross-effect) of $F$ at $\emptyset$. The following fact is particularly important for analysing intrinsic convergence. Recall that a cubical diagram is called {\it $c$-cartesian} if the map from the initial object to the homotopy limit of the rest of the cubical diagram is $c$-connected. Suppose the cubical diagram~\eqref{eq:cube} is $c_k$-cartesian. Then the map $T_kF(M) \to T_{k-1}F(M)$ is $c_k-mk$-connected. Thus the Taylor tower of $F$ converges intrinsically at $M$ if the number $c_k-mk$ approaches $\infty$ as $k$ approaches $\infty$.

When $F(M)=\Ebar(M, \R^n)$, there is a well-known equivalence $\Ebar(\coprod_k \R^m, \R^n)\simeq \Conf(k,\R^n)$, where $\Conf(k,\R^n)$ is the configuration space of ordered $k$-tuples of pairwise distinct points in $\R^n$.
Similarly, there is an equivalence between $\rbar(\coprod_k \R^m, \R^n)$ and the so-called $r$-configuration space, also called \textit{no $r$-equal} configuration space, defined by $$\rConf(k,\R^n):= \rImm(\underline{k},\R^n).$$ 
This is the space of ordered $k$-tuples of points in $\R^n$ where at most $r-1$ are allowed to be equal. A proof of the equivalence 
\[
\rbar(\coprod_k \R^m, \R^n)\xrightarrow{\simeq}\rConf(k, \R^n)
\]
is given in \cite{AS:rimmrconf}. Thus $r$-configuration spaces are basic building blocks in the Taylor tower of $\rImm(M, \R^n)$.


To analyse the intrinsic convergence of the Taylor tower of the functor $\HZ\wedge \rbar(-, \R^n)$, one needs to calculate how cartesian the following $k$-dimensional cubical diagram is
\begin{equation}\label{eq: homology cube}
S\mapsto \HZ \wedge \rConf(\underline{k}\setminus S, \R^n).
\end{equation}
The space $\rConf(i, \R^n)$ is the complement of a subspace arrangement in $\R^{ni}$. It follows that the homology of $r$-configuration spaces is accessible by means of the Goresky-MacPherson formula and other such tools. The homology of $r$-configuration spaces was studied by a number of people, starting with Bj\"orner and Welker~\cite{Bjo95}.

Using the Goresky-MacPherson formula and the results in~\cite{Bjo95} we prove the following result (it is combining Proposition~\ref{prop: conf connectivity} and Theorem~\ref{T:Convergence})
\begin{thm}\label{thm: connectivity}
When $r\le n+1$, the cube~\eqref{eq: homology cube} is $k(n-1)+\left\lfloor \frac{k}{r}\right\rfloor (r-n-1)$-cartesian, and the map
\begin{equation*}
p_k\colon T_k\HZ\wedge \rbar(M, \R^n)\to T_{k-1}\HZ\wedge \rbar(M, \R^n)
\end{equation*}
is $$k\left(n\frac{r-1}{r}-m-\frac{1}{r}\right)-\frac{(k\hspace{-8pt}\mod r)}{r} (r-n-1)\mbox{-connected.}$$ Here $(k\hspace{-4pt}\mod r):=k-r\left\lfloor \frac{k}{r}\right\rfloor$.

When $r\ge n+1$, the cube~\eqref{eq: homology cube} is $k(n-1)+r-n-1$-cartesian, and the map $p_k$ is $$k(n-m-1)+r-n-1\mbox{-connected.}$$
\end{thm}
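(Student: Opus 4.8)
The plan is to treat the two assertions separately and to deduce the statement about $p_k$ from the one about the cube~\eqref{eq: homology cube}, using the principle recalled above: if~\eqref{eq: homology cube} is $c_k$-cartesian then $p_k$ is $(c_k-mk)$-connected. Granting the cartesianness estimates, the $p_k$-statements are pure bookkeeping; for instance, in the range $r\le n+1$, writing $q=\lfloor k/r\rfloor$ and $t=k-rq$,
\[
k(n-1)+q(r-n-1)-mk=k\!\left(n\tfrac{r-1}{r}-m-\tfrac1r\right)-\tfrac tr(r-n-1),
\]
which is exactly the asserted connectivity, and for $r\ge n+1$ one gets $k(n-m-1)+r-n-1$ at once. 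So everything reduces to Proposition~\ref{prop: conf connectivity}, the estimate on how cartesian~\eqref{eq: homology cube} is.

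For that, I would first observe that~\eqref{eq: homology cube} is a cube of $\HZ$-module spectra, so its total homotopy fiber $\calE=\tfiber\bigl(S\mapsto \HZ\wedge\rConf(\underline{k}\setminus S,\R^n)\bigr)$ is again an $\HZ$-module; hence ``$(c_k-1)$-connected'' means exactly ``$H_j(\calE)=0$ for $j<c_k$'', and the task is to locate the bottom nonzero homology of $\calE$ (the structure maps of the cube being the coordinate projections, which are defined because deleting coordinates cannot create an $r$-fold coincidence). Now $\rConf(j,\R^n)$ is the complement in $(\R^n)^j$ of the $r$-equal arrangement, whose intersection lattice is the poset $\Pi_{j,r}$ of partitions of $\{1,\ldots,j\}$ all of whose blocks have size $1$ or at least $r$; the Goresky--MacPherson formula writes $\tilde H_*(\rConf(j,\R^n))$ as a direct sum over $\Pi_{j,r}\setminus\{\hat 0\}$ of shifted reduced homologies of order complexes of intervals $(\hat 0,\pi)$. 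Passing to $\calE$, the summands attached to partitions in which some element is a singleton are pulled back along the relevant projection and so contribute nothing; what remains is indexed by the partitions $\pi$ of $\{1,\ldots,k\}$ with no singleton blocks, i.e.\ with every block of size $\ge r$, and such a $\pi$ with block sizes $b_1,\ldots,b_s$ contributes in degrees $\ge\sum_i j(b_i)$, where $j(b):=n(b-1)-2-\beta_b$, with $n(b-1)$ the codimension of the corresponding stratum and $\beta_b$ the top degree in which $\tilde H_*(\Delta(\hat 0,\hat 1))$ is nonzero in $\Pi_{b,r}$.

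The decisive input is the Bj\"orner--Welker computation~\cite{Bjo95}, which gives $\beta_b=b-r-1$ for every $b\ge r$; thus $j(b)=b(n-1)-(n-r+1)$ is affine in $b$, and for a no-singleton partition with $s$ blocks $\sum_i j(b_i)=k(n-1)-s(n-r+1)$. Since $s$ runs exactly over $1\le s\le\lfloor k/r\rfloor$ (and there are no such partitions when $k<r$, in which case $\calE\simeq\ast$ and the bound holds vacuously), minimising over $\pi$ gives: for $r\le n$ take $s=\lfloor k/r\rfloor$, giving $k(n-1)+\lfloor k/r\rfloor(r-n-1)$; for $r\ge n+2$ take $s=1$, giving $k(n-1)+r-n-1$; and for $r=n+1$ the two expressions coincide. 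In each case the bottom nonzero homology of $\calE$ sits in precisely that degree --- the corresponding Goresky--MacPherson summand is genuinely nonzero there and is not hit by a differential of the cube --- so~\eqref{eq: homology cube} is that-much-cartesian. Specialising to $r=2$ recovers the connectivity of the cube $S\mapsto\HZ\wedge\Conf(\underline{k}\setminus S,\R^n)$ that underlies the convergence of the homological embedding tower.

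The main obstacle, I expect, is making rigorous the two claims just used: that $H_*(\calE)$ really is the ``no-singleton'' part of the arrangement homology with no surviving differential at the bottom, and that the Goresky--MacPherson decomposition is natural enough in the projection maps of the cube for this to hold. The cleanest way around it is probably to model $\rConf(j,\R^n)$, after smashing with $\HZ$, by a cube of chain complexes that splits along $\Pi_{j,r}$ --- using a cellular model for the arrangement complement, or formality --- so that the total fiber can be analysed summand by summand and the combinatorial minimisation above applies verbatim.
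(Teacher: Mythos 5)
Your outline is in essence the paper's own route: reduce the statements about $p_k$ to the cartesianness of the cube~\eqref{eq: homology cube} via Proposition~\ref{P:ConnOfLayer}, describe the cohomology of $\rConf(j,\R^n)$ by the Goresky--MacPherson formula over the $r$-equal partition lattice, argue that only partitions with no singleton blocks survive in the total fiber, and minimize $k(n-1)+s(r-n-1)$ over the number $s$ of blocks using the Bj\"orner--Welker fact that the top nonvanishing homology of $\Pi_{b,r}^0$ sits in degree $b-r-1$; your arithmetic at both ends (including the rewriting with $\lfloor k/r\rfloor$ and $k\bmod r$) is correct. However, the step you yourself flag as ``the main obstacle'' is a genuine gap, and it is precisely what the paper devotes Sections~\ref{S:Retractive} and~\ref{S: Conf is retractive} to. You need to know that the homology of the total fiber of~\eqref{eq: homology cube} is exactly the total kernel (equivalently, that the cohomology is the total cokernel) of the (co)homology cube --- no differentials, no extension problems --- and that the Goresky--MacPherson decomposition is compatible with the restriction maps; your suggested fix (a chain-level model splitting along the lattice, or formality) is neither carried out nor obviously natural in the projections. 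The paper's resolution is elementary and different from what you propose: the cube $S\mapsto\rConf(\underline{k}\setminus S,\R^n)$ is \emph{retractive} --- each restriction map has a section given by adding a new point far away in the first coordinate direction, and these sections commute up to homotopy with one another and with the restrictions (Lemma~\ref{lemma: config retractive}; this is where $n\ge 2$ enters) --- and for retractive cubes any homology or cohomology theory of the total fiber is canonically the total kernel/cokernel of the corresponding cube of groups (Lemma~\ref{L:commute}). Combined with naturality of the Goresky--MacPherson isomorphism for inclusions of subarrangements (Hu's theorem, Lemma~\ref{lemma: naturality}), this yields exactly your ``no-singleton'' description of the answer (Lemma~\ref{L:totalcokernel}).

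Two smaller points are asserted in your write-up without justification. First, the blockwise additivity ``contributes in degrees $\ge\sum_i j(b_i)$'' conceals the identification $[\hat 0,x]\cong\Pi_{k_1,r}\times\cdots\times\Pi_{k_{c(x)},r}$ together with the fact that the order complex of a product of posets is, up to a suspension, the join of the order complexes; this is how the paper obtains the top degree $k-c(x)(r-1)-2$ (Lemma~\ref{L:dimensionofcomplex}). Your formula is correct, but this decomposition is the reason it is. Second, Goresky--MacPherson computes cohomology, while cartesianness is read off the homology (or homotopy) of the total fiber of a cube of spectra; the paper bridges this by noting that all groups in sight are finitely generated free abelian, so homology and cohomology of the total fiber agree. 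Neither point is difficult, but together with the retractiveness argument they constitute the actual content of the proof beyond the combinatorial bookkeeping you did carry out correctly.
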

Theorem~\ref{thm: convergence} follows easily from Theorem~\ref{thm: connectivity}.

In Section~\ref{section:compare} we compare the tower of the homological functor $\HZ\wedge \rbar(M, \R^n)$ with that of the tower of the homotopical functor $\rImm(M, \R^n)$. Let us suppose that we chose a basepoint in the space $\rImm(M, \R^n)$. In this case the presheaf $\rbar(-, \R^n)$ takes values in pointed spaces, and we have the following diagram of presheaves:
\begin{equation}\label{eq: comparison}
\rImm(-, \R^n)\xleftarrow{i} \rbar(-, \R^n) 
\xrightarrow{h} \Omega^\infty\HZ\wedge \rbar(-, \R^n).
\end{equation}
It is well-known that the map $i$ induces an equivalence of all layers except the first one. Indeed, the map $i$ is the homotopy fiber of the map from $\rImm(-, \R^n)$ to its linear approximation. Thus we can view the map $h$ as a map from the higher layers/derivatives of $\rImm(-, \R^n)$ to the corresponding layers/derivatives of $\Omega^\infty\HZ\wedge \rbar(-, \R^n)$, which are essentially the same as the layers/derivatives of $\HZ\wedge \rbar(-, \R^n)$, since $\Omega^\infty$ commutes with Taylor approximations.

When $r=2$, the second derivative of $\rImm(-, \R^n)$ is equivalent to $S^{n-1}$, and the second derivative of $\HZ\wedge \rImm(-, \R^n)$ is $\HZ\wedge S^{n-1}$. It follows that in the case $r=2$, the map $h$ in~\eqref{eq: comparison} induces the Hurewicz homomorphism from the second derivatives of $\rImm(-, \R^n)$ to the second derivative of $\HZ\wedge \rbar(-, \R^n)$. In particular, it follows that the connectivity of the quadratic layers of the Taylor towers of $\rImm(-, \R^n)$ and of $\HZ\wedge \rbar(-, \R^n)$ is the same, and their first non-trivial homotopy groups are isomorphic. 

By contrast, at degrees higher than $2$, the layers of the homotopical tower $\rImm(-, \R^n)$ and of the homological tower of the functor $\HZ\wedge \rbar(-, \R^n)$ have different connectivities, and there is no Hurewicz type isomorphism between them.

And again by contrast, in Section~\ref{section:compare} we show that for $r>2$ the map $h$ in diagram~\eqref{eq: comparison} induces a Hurewicz type isomorphism between first non-trivial homotopy groups of layers roughly up to degree $2r-1$. See Theorem~\ref{thm: comparison} for precise statement.

\subsubsection*{Organization of the paper}
In Section~\ref{S:Prereq} we review some background material on cubical diagrams, manifold calculus and spectra. In Section~\ref{S:main} we introduce the homological Taylor tower that is the main subject of this paper. 

In Section \ref{S:GM} we make an excursion into the subspace arrangements. We describe $r$-configuration spaces via subspace arrangements and compute their cohomology using the Goresky-MacPherson theorem. 

In Section~\ref{S:Retractive} we define the notion of a retractive cubical diagram. This is a diagram where the maps have sections that satisfy a certain hypothesis. We prove that the homotopy groups of the total homotopy fiber of a retractive cube are isomorphic to the total kernel of the cube of homotopy groups.

In Section~\ref{S: Conf is retractive} we prove that the cube of $r$-configuration spaces that controls the layers in the Taylor tower is retractive. In Section~\ref{S:ccrcs} we prove our main result about the homological connectivity of the cube of $r$-configuration spaces. In Section~\ref{convergenceresult} we prove the main result about the intrinsic convergence of the Taylor tower of $\HZ\wedge \rbar(M, \R^n)$.

In Section~\ref{section:compare} we compare the tower of $\HZ\wedge \rbar(M, \R^n)$ with the tower of $\rImm(M, \R^n)$ in low degrees. We prove that the layers in the two towers have the same connectivity up to degree $2r-1$ (with some exceptions in the cases $r=2, 3$).

In Section~\ref{S:Further} we discuss some possible directions for further exploration.

\end{section}

\vspace*{0.5cm}
\begin{section}{Prerequisites}\label{S:Prereq}
\begin{subsection}{Cubical diagrams}\label{SS:cubicaldiagrams}
Cubical diagrams play an important role in functor calculus, and in this paper in particular, so we will recall a few elementary facts about them. All the results in this subsection, and much more, can be found in~\cite{CalcII}.

Let $\underline{k}$ denote the standard set with $k$ elements $\{1, \ldots, k\}$. Let $\calP(\underline{k})$, or just $\calP(k)$, denote the poset of subsets of $\underline{k}$. A $k$-dimensional cubical diagram in a category $\calC$ is a functor $\chi\colon \calP\to \calC$. It is easy to see that $\calP(k)$ is equivalent to $\calP(k)^{\mathrm{op}}$, so a contravariant functor from $\calP(k)$ to $\calC$ is called a cubical diagram as well.
We will mostly consider cubical diagrams in (pointed) spaces and spectra, and also in abelian groups.

Given a cubical diagram $\chi$ in topological spaces or spectra, there is a natural map
\[
i_\chi\colon \chi(\emptyset)\to \underset{\emptyset \ne S\subset \underline{k}}{\holim} \ \chi(S).
\]
We say that $\chi$ is $c$-cartesian, if this map is $c$-connected. The homotopy fiber of this map is called the {\it total homotopy fiber} of $\chi$. The total homotopy fiber of $\chi$ is denoted by $\tfiber(\chi)$. Clearly if $\chi$ is $c$-cartesian then $\tfiber(\chi)$ is $c-1$-connected. The converse always holds for cubical diagrams of spectra, and it holds for spaces under the additional assumption that $i_\chi$ is surjective on path components.

One can identify a $k$-dimensional cubical diagram with a map of two $k-1$-dimensional cubical diagrams. Given a $k$-dimensional cubical diagram $\chi$, let us define two $k-1$-dimensional cubical diagrams $\chi_1$ and $\chi_2$ as follows: $\chi_1(U)=\chi(U)$, and $\chi_2(U)=\chi(U\cup \{k\})$. Then $\chi$ can be identified with the map of cubes $\chi_1\to \chi_2$. Furthermore, there is a homotopy fibration sequence whose meaning is that total homotopy fiber can be calculated as an iterated homotopy fiber
\[
\tfiber(\chi)\simeq \hofiber(\tfiber(\chi_1)\to \tfiber(\chi_2)).
\]

When $\chi$ is a cubical diagram of abelian groups, we define the total kernel of $\chi$ to be 
\[
\tkernel(\chi):=\ker(\chi(\emptyset) \to \prod_{i=1}^k \chi(\{i\}))
.\] 
Just as with total fibers, the total kernel can be calculated as an iterated kernel. There is a natural isomorphism
\[
\tkernel(\chi)\cong \ker(\tkernel(\chi_1)\to \tkernel(\chi_2)).
\]
When $\chi$ is a cubical diagram of spaces or spectra, there is a natural homomorphism of graded groups
\[
\pi_*(\tfiber\chi)\to \tkernel(\pi_*\chi).
\]
This homomorphism is not an isomorphism in general. In Section~\ref{S:Retractive} we will investigate a condition on a cubical diagram that guarantees for it to be an isomorphism.
\end{subsection}
\begin{subsection}{Manifold calculus of functors}\label{SS:manifoldcalculus}

Let $M$ be a smooth manifold of dimension $m$. Define $\mathcal{O}(M)$ to be the poset category of open subsets of $M$. Objects of $\mathcal{O}(M)$ are open sets $U \subseteq M$, and morphisms $U \rightarrow V$ are the inclusions $U \subseteq V$.

Manifold calculus of functors, developed by Weiss \cite{W:EI1} and Goodwillie-Weiss \cite{GW:EI2}, studies contravariant functors from $\mathcal{O}(M)$ to a category that supports a reasonable notion of homotopy. In their foundational papers, Goodwillie and Weiss only considered functors with values in topological spaces, and maybe spectra. Nowadays it is natural to let the target category to be an $\infty$-category. We will content ourselves with functors with values in (pointed) spaces and in spectra. 

Technically speaking, manifold calculus applies to functors that are {\it good}, in the sense that they satisfy the following two conditions:\\
\hspace*{1cm} (i) they are \textit{isotopy} functors, and\\
\hspace*{1cm} (ii) they are \textit{finitary}.\\
A functor is an isotopy functor if it takes isotopy equivalences to weak homotopy equivalences (for the definition of \textit{isotopy equivalence} see \cite[Definition 10.2.2]{MV:Cubes}). It is finitary if for every monotone union $\bigcup_{i} U_i$ (where $U_i \subset U_{i+1}$ for $i=1,2,...$) the canonical map from $F(\bigcup_{i} U_i)$ to $\holim_i F(U_i)$ is a weak homotopy equivalence. 

If $F$ is a "half-good" contravariant functor (cofunctor), i.e. an isotopy functor which is not a finitary functor, then we need to \text{tame} this functor. We call $V \in \mathcal{O}(M)$ \textit{tame} if $V$ is the interior of a compact smooth codimension zero submanifold of $M$. As mentioned in [GKW01], property (ii) ensures that a good cofunctor $F$ on $\mathcal{O}(M)$ is essentially determined by its behavior on tame open subsets of $M$.

In particular, suppose $F$ is a cofunctor from $\mathcal{O}(M)$ to $\Top$ having property (i). Then the functor defined by $$F^{\#}(V):=\holim_{\text{tame } U \subset V} F(U)$$ for $V \in \mathcal{O}(M)$ has also property (ii), i.e. $F^{\#}$ is a good cofunctor on $\mathcal{O}(M)$. We call $F^{\#}$ the \textit{taming} of $F$.

There exists a natural transformation $F \to F^{\#}$. The map $F(V)\to F^{\#}(V)$ is an equivalence whenever either $F$ or $V$ is tame. 

The motivating example for the development of the manifold calculus of functors is the \textit{embedding} functor.
\begin{defin}(Space of embeddings)\label{D:embed} Let $M$ and $N$ be smooth manifolds.
\begin{itemize}
\item{A \textit{smooth embedding} of $M$ in $N$ is a smooth map $f:M \rightarrow N$ such that\\
1. the map of tangent spaces $$D_x f: T_x M \rightarrow T_{f(x)}N$$
\hspace*{0.47cm}is an injection for all $x \in M$, i.e. the derivative of $f$ is a fiberwise injection, and\\
2. $f: M \rightarrow f(M)$ is a homeomorphism.}\\
\item{The \textit{space of embeddings}, $\Emb(M,N)$, is the subspace of the space of smooth maps from $M$ to $N$ consisting of smooth embeddings of $M$ in $N$. The space $\Emb(M,N)$ is topologized using Whitney $\mathcal{C}^{\infty}$-topology; for an explanation see \cite[Appendix A.2.2]{MV:Cubes}).}
\end{itemize}
\end{defin}
An important example of a space of embeddings with very rich theory is the space of classical \textit{knots} defined to be the space $\Emb(S^1,\R^3)$. 
\begin{defin}(Embedding functor)\\
For a smooth $n$-dimensional manifold $N$, the \textit{embedding functor} $\Emb(-,N) : \mathcal{O}(M) \to \Top$ is a contravariant functor given by $U \mapsto \Emb(U,N)$.
\end{defin}
The contravariance follows from the fact that an inclusion of open subsets of a manifold $M$ gives a restriction map of embedding spaces of manifolds.

A related notion is the space of \textit{immersions} $\Imm(M,N)$, which is a space of smooth maps $f:M \rightarrow N$ such that just the derivative of $f$ is a fiberwise injection, (property 1. from Definition \ref{D:embed}). If $M$ is a compact manifold and $f$ is an injective immersion $M \rightarrow N$, then $f$ is an embedding.

The corresponding functor is the \textit{immersion functor} $\Imm(-,N): \mathcal{O}(M) \to \Top$ given by $U \mapsto \Imm(U,N)$.

Functors $\Emb(-,N)$ and $\Imm(-,N)$ are examples of good functors (see \cite{W:EI1} and \cite{GKW:EmbDisjSurg}).

The idea of the manifold calculus of functors is to approximate a good functor with simpler, polynomial functors.
\begin{defin}(Polynomial functor)\\
A good contravariant functor $F: \mathcal{O}(M) \to \Top$ is called \textit{polynomial of degree $\leq k$} if for all $U \in \mathcal{O}(M)$ and for all pairwise disjoint closed subsets $A_0, ..., A_k \subset U$, the $(k+1)$-cube $$\mathcal{P}(\underline{k+1}) \to \Top$$ $$S \mapsto F(U - \bigcup_{i\in S}A_i)$$ is homotopy cartesian; equivalently, the map $F(U) \to \holim_{S \neq \emptyset} F(U - \bigcup_{i\in S}A_i)$ is a homotopy equivalence. Here $\mathcal{P}(\underline{k+1})$ is the poset category of all subsets of the set $\underline{k+1}=\{1,...,k+1\}$ with $\subset$ as the relation of partial order. Its shape is an $(k+1)$-dimensional cubical diagram.
\end{defin}

It is well known that a polynomial $f: \R \rightarrow \R$ of degree $k$ such that $f(0)=0$ is uniquely determined by its values on $k$ distinct points. In analogy, a polynomial functor is completely determined by its values on the category of at most $k$ open discs. \cite{M:MfldCalc} provides more analogies between the ordinary calculus of functions and the manifold calculus of functors.

More precisely, let $\mathcal{O}_k(M)$ be the full subcategory of $M$ consisting of open subsets of $M$ diffeomorphic to $\leq k$ disjoint discs. We have the following theorem due to Weiss (\cite[Theorem 5.1]{W:EI1}).
\begin{thm}\label{T:OkO}
Suppose $F,G: \mathcal{O}(M) \longrightarrow \Top$ are good functors that are polynomials of degree $\leq k$. If $T:F \to G$ is a natural transformation that is an equivalence for all $U \in \mathcal{O}_k(M)$, then $T$ is an equivalence for all $U \in \mathcal{O}(M)$.
\end{thm}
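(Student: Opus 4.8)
The plan is to reduce the statement to handlebodies and then run an induction over a handle decomposition. Since $F$ and $G$ are good they are finitary, and every open $U\subseteq M$ is a monotone union $U=\bigcup_i\operatorname{int}(L_i)$ with $\operatorname{int}(L_i)\subseteq\operatorname{int}(L_{i+1})$, where each $L_i\subseteq M$ is a compact codimension-zero submanifold carrying a finite handle decomposition (exhaust $U$ by compact sets and thicken). A natural transformation between finitary functors that is an equivalence on every $\operatorname{int}(L_i)$ is an equivalence on $U$, so it suffices to prove that $T_U\colon F(U)\to G(U)$ is an equivalence whenever $U=\operatorname{int}(L)$ for such an $L$. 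I would prove this by well-founded induction over the pairs $\bigl(h_{\max}(L),\,n(L)\bigr)$ ordered lexicographically, where $h_{\max}(L)$ is the largest handle index occurring in the chosen decomposition and $n(L)$ is the number of handles of that index; a disjoint union of $n$ balls is assigned the pair $(0,n)$.

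If $\operatorname{int}(L)$ is diffeomorphic to at most $k$ copies of $\R^m$ --- equivalently $h_{\max}(L)=0$ and $n(L)\le k$ --- then $U\in\mathcal{O}_k(M)$ and $T_U$ is an equivalence by hypothesis; this is the base of the induction. If $\operatorname{int}(L)$ is a disjoint union of $n>k$ balls, pick $k+1$ of its components $B_1,\dots,B_{k+1}$; these are pairwise disjoint closed subsets of $U$, so polynomiality of degree $\le k$ of $F$ and of $G$ identifies $F(U)$ and $G(U)$ with the homotopy limits $\holim_{\emptyset\ne S\subseteq\underline{k+1}}F\bigl(U\setminus\bigcup_{i\in S}B_i\bigr)$ and $\holim_{\emptyset\ne S\subseteq\underline{k+1}}G\bigl(U\setminus\bigcup_{i\in S}B_i\bigr)$; every term of these punctured cubes is a disjoint union of fewer than $n$ balls, so has lexicographically smaller invariant, and the induction hypothesis closes this case. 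Finally, suppose $L$ has a handle $h\cong D^{j}\times D^{m-j}$ of maximal index $j\ge1$, and let $C_1,\dots,C_{k+1}$ be $k+1$ pairwise disjoint parallel copies of the cocore disk of $h$, each a closed subset of $\operatorname{int}(L)$ diffeomorphic to $\R^{m-j}$. By polynomiality of degree $\le k$ of $F$ and of $G$, the $(k+1)$-cubes $S\mapsto F\bigl(\operatorname{int}(L)\setminus\bigcup_{i\in S}C_i\bigr)$ and $S\mapsto G\bigl(\operatorname{int}(L)\setminus\bigcup_{i\in S}C_i\bigr)$ are homotopy cartesian, so it is enough to show $T$ is an equivalence on $\operatorname{int}(L)\setminus\bigcup_{i\in S}C_i$ for every nonempty $S$.

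This reduces everything to a handle-theoretic computation, which I expect to be the main point to get right. Removing $s$ parallel copies of the cocore of $h$ replaces the handle region $D^{j}\times D^{m-j}$ by $\bigl(D^{j}\setminus\{s\text{ points}\}\bigr)\times D^{m-j}$; relative to its attaching sphere $\partial D^{j}$, the punctured disk $D^{j}\setminus\{s\text{ points}\}$ is a collar together with $s-1$ handles of index $j-1$, so the net effect on $L$ is to delete $h$ and attach at most $k$ handles of index $j-1$. Hence $\operatorname{int}(L)\setminus\bigcup_{i\in S}C_i$ is, up to isotopy equivalence (which $F$ and $G$ respect since they are isotopy functors), the interior of such a modified handlebody, and its invariant is lexicographically strictly smaller than $\bigl(h_{\max}(L),n(L)\bigr)$: the number of handles of index $j$ strictly drops while nothing is created in index $\ge j$. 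By the induction hypothesis $T$ is then an equivalence on every nonempty face of the two cubes above, hence on their homotopy limits; since $F$ and $G$ are polynomial of degree $\le k$ the maps from $F(U)$ and $G(U)$ to these homotopy limits are equivalences, and it follows that $T_U\colon F\bigl(\operatorname{int}(L)\bigr)\to G\bigl(\operatorname{int}(L)\bigr)$ is an equivalence. Unwinding the initial reductions then yields the statement for all $U\in\mathcal{O}(M)$.
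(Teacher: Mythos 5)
Your proposal is correct and follows essentially the argument behind the result as the paper uses it: the paper does not prove this statement but cites \cite[Theorem 5.1]{W:EI1}, whose proof is exactly this reduction (via the finitary property) to interiors of compact handlebodies, followed by induction on the top handle index and the number of top-index handles, removing $k+1$ disjoint parallel cocores of a top handle and invoking polynomiality and isotopy invariance. So you have reproduced the cited proof rather than found a different route.
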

\begin{example}\label{E:PolynomialFunctors}\hspace{12cm}
\begin{itemize}
\item{The functor $U \mapsto \Imm(U,N)$ is a polynomial of degree $\leq 1$.}
\item{The functor $U \mapsto \Emb(U,N)$ is not a polynomial of degree $\leq k$ for any $k$.}
\end{itemize}
For the details, see \cite[Example 10.2.10]{MV:Cubes}, \cite[Example 2.3]{W:EI1}, \cite[Examples 4.7 and 4.8]{M:MfldCalc}.
\end{example}
\begin{defin}(Polynomial approximations)\\
For a good functor $F$, define for each $U \in \mathcal{O}(M)$ the \textit{$k^{\text{th}}$ polynomial approximation} of $F$ to be $$T_kF(U)=\holim_{V\in \mathcal{O}_k(U)}F(V).$$
\end{defin}
As Weiss proved in \cite[Theorems 3.9. and 6.1]{W:EI1}, such defined $T_kF$ is polynomial of degree $\leq k$. Also, higher derivatives of such defined polynomial functors vanish and derivatives of a functor and derivatives of its $k^{\text{th}}$ polynomial approximation agree up to $k^{\text{th}}$ degree, where the derivatives of functors are defined as follows:
\begin{defin}\label{D:Derivative}(Derivative of a functor)\\
Let $D_{1}^m, ..., D_{k}^m$ be pairwise disjoint open discs in $M$. Define a $k$-cube of spaces by the rule $S \mapsto F(\bigcup_{i \notin S}
D_{i}^m)$. We define the \textit{$k^{\text{th}}$ derivative} of $F$ at the empty set, denoted $F^{(k)}(\emptyset)$, to be the total homotopy fiber of the cube $S \mapsto F(\bigcup_{i \notin S}D_{i}^m)$.
\end{defin}
For example, the $1^{\text{st}}$ derivative of embeddings are immersions. Also, the linearization of the space of embeddings is the space of immersions, namely there exists an equivalence $T_1 \Emb(-,N) \simeq \Imm(-,N)$ (\cite{W:EI1}).

For more details and intuition behind this, see Munson's survey \cite{M:MfldCalc}. For other relevant results, see \cite[Theorem 10.2.16]{MV:Cubes} and \cite{W:EI1}.



The inclusion $\mathcal{O}_{k-1}(U) \rightarrow \mathcal{O}_k(U)$ induces a map $T_kF(U) \to T_{k-1}F(U)$ and so we obtain a tower of functors, called the \textit{manifold calculus Taylor tower} of $F$:

\begin{equation}\label{E:Tower}
\xymatrix{
&   &     F(-)   \ar[dll]\ar[d]\ar[dr]\ar[drrr]          &                      &               &      \\
T_{0}F(-)  & \ar[l] \cdots &   T_{k-1}F(-) \ar[l] &    T_{k}F(-) \ar[l]   & \ar[l] \cdots &   T_{\infty}F(-) \ar[l]                          
}
\end{equation}
Here $T_{\infty}F$ denotes the \textit{homotopy inverse limit} of this tower. $T_k F$ is also called the \textit{$k^{th}$ stage} of the Tower.\\
By evaluating diagram \eqref{E:Tower} on $U \in \mathcal{O}(M)$, we get a diagram of spaces with maps between the stages that are fibrations. In particular, we can set $U=M$.
\begin{defin}(Layer)\\
Define the \textit{$k^{\text{th}}$ layer} of the manifold calculus Taylor tower of  $F$ to be the homotopy fiber of the map between two successive stages of the tower, that is, $$L_kF=\hofiber(T_kF \to T_{k-1}F).$$
\end{defin}
We need to work here with a based Taylor tower. It can be accomplished by choosing a basepoint in the space $F(M)$ which then also bases the spaces $T_kF(U)$ for all $k$ and $U$.

One of the fundamental results, which is a consequence of the Classification of \textit{homogeneous functors} theorem (\cite[Theorem 8.5]{W:EI1}, see also \cite[Theorem 10.2.23 and Proposition 10.2.26]{MV:Cubes}) is the following
\begin{prop}\label{P:ConnOfLayer}
For a good functor $F$ defined on $m$-dimensional manifolds, if 
the cube $$S\mapsto F\left(\coprod_{\underline{k} \setminus S}D^m\right)$$ is $c_k$-cartesian, then the map $T_kF(M)\to T_{k-1}F(M)$ is $c_k-km$-connected. 
More generally, if $U$ has handle dimension $j$, then the map $T_kF(U)\to T_{k-1}F(U)$ is $(c_k-kj)$-connected.
\end{prop}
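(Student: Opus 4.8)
The plan is to derive the proposition from Weiss's classification of homogeneous functors, which converts it into a connectivity estimate for a space of sections; essentially everything is then bookkeeping, since the geometric content is packaged in results already cited.

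First I would note that the layer $L_kF=\hofiber(T_kF\to T_{k-1}F)$ is homogeneous of degree $k$: it is polynomial of degree $\le k$ (being a pointwise homotopy fiber of a map of functors polynomial of degree $\le k$), and $T_{k-1}L_kF\simeq\hofiber(T_{k-1}T_kF\to T_{k-1}T_{k-1}F)\simeq\hofiber(T_{k-1}F\to T_{k-1}F)\simeq\ast$, using $T_{k-1}T_k\simeq T_{k-1}\simeq T_{k-1}T_{k-1}$. By the classification of homogeneous functors \cite[Theorem 8.5]{W:EI1} (see also \cite[Theorem 10.2.23]{MV:Cubes}) there is a natural weak equivalence
\[
L_kF(U)\ \simeq\ \Gamma_c\!\left(\binom{U}{k};\ p_k\right),
\]
the space of compactly supported sections of a fibration $p_k\colon Z_k\to\binom{U}{k}$ over the configuration space of $k$ unordered points of $U$, whose fiber over a configuration $\{x_1,\dots,x_k\}$ is canonically, and $\Sigma_k$-equivariantly, the total homotopy fiber of the cube $S\mapsto F\big(\coprod_{i\in\underline{k}\setminus S}V_i\big)$ for small disjoint disks $V_i\ni x_i$ --- that is, the $k$-th derivative $F^{(k)}(\emptyset)$ of \refD{Derivative}.

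Next I would feed in the hypothesis. The cube $S\mapsto F(\coprod_{\underline{k}\setminus S}D^m)$ being $c_k$-cartesian says precisely that its total homotopy fiber --- the fiber of $p_k$ --- is $(c_k-1)$-connected. Now I would invoke the standard estimate for section spaces: if $E\to B$ is a fibration whose fibers are $(c-1)$-connected and $B$ has the homotopy type of a $d$-dimensional CW complex (e.g.\ $B$ an open $d$-manifold, filtered by a handle decomposition), then $\Gamma_c(B;E)$ is $(c-1-d)$-connected; this is a skeletal induction, the obstruction at the $i$-th stage living in cohomology with coefficients in $\pi_{\ast}$ of the fiber and vanishing once $i>d$ (see \cite[Proposition 10.2.26]{MV:Cubes}). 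Applying this with $B=\binom{M}{k}$, an open manifold of dimension $km$, gives that $L_kF(M)$ is $(c_k-1-km)$-connected, and since $L_kF(M)$ is the homotopy fiber of $T_kF(M)\to T_{k-1}F(M)$, that map is $(c_k-km)$-connected. For the relative statement one uses instead that $\binom{U}{k}$ has handle dimension $\le kj$ when $U$ has handle dimension $\le j$ \cite{W:EI1}, and the identical argument gives $(c_k-kj)$-connectivity.

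The step that needs the most care --- the nearest thing to an obstacle --- is making sure the three pieces of arithmetic line up: (i) ``$c_k$-cartesian cube'' $\Rightarrow$ ``fiber of $p_k$ is $(c_k-1)$-connected''; (ii) ``$(c_k-1)$-connected fiber over a base of dimension $km$'' $\Rightarrow$ ``section space $(c_k-1-km)$-connected''; and (iii) ``$(c_k-1-km)$-connected layer'' $\Rightarrow$ ``$(c_k-km)$-connected map'' --- each is a standard one-line move, but the cumulative off-by-ones must cancel exactly, as they do. The only genuinely geometric inputs, and the places where one must quote the literature rather than compute, are the classification theorem itself (and in particular that the fiber of $p_k$ is exactly $F^{(k)}(\emptyset)$, so that the hypothesis enters at the right spot) and the bound on the handle dimension of the configuration spaces $\binom{M}{k}$, $\binom{U}{k}$.
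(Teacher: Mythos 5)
Your argument is correct and is exactly the derivation the paper intends: the paper gives no proof of Proposition~\ref{P:ConnOfLayer}, citing instead the classification of homogeneous functors (\cite[Theorem 8.5]{W:EI1}, \cite[Theorem 10.2.23 and Proposition 10.2.26]{MV:Cubes}), which is precisely the section-space description over $\binom{U}{k}$ with fiber $F^{(k)}(\emptyset)$ and the handle-dimension connectivity estimate that you spell out. The bookkeeping ($c_k$-cartesian gives a $(c_k-1)$-connected fiber, hence a $(c_k-1-km)$-connected section space, hence a $(c_k-km)$-connected map, with $km$ replaced by $kj$ in the handle-dimension case) is carried out correctly.
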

For the definition of \textit{handle dimension}, see \cite[Appendix A.2.1]{MV:Cubes}. 

It follows that the Taylor tower of $F$ converges intrinsically at $M$ if the number $c_k-mk$ approaches $\infty$ as $k$ approaches $\infty$.

\end{subsection}

\vspace*{0.5cm}
\begin{subsection}{Spectra}
The subject of this paper is a functor that represents homology. We had a choice between working with chain complexes and the singular chains functor, or working with spectra and using smash product with the Eilenberg-MacLane spectrum to represent homology. We chose the latter.

We adopt a naive, old-fashioned view of spectra as sequences of spaces equipped with structure maps between them.
\begin{defin}(Spectrum)\\
A spectrum $\underline{E}$ is a sequence of based spaces $\{E_n\}_{n\in \mathbb{N}_0}$ together with basepoint-preserving maps (called \textit{structure maps})
\begin{equation}\label{E:SigmaMap}
\Sigma E_n \rightarrow E_{n+1},
\end{equation}
or, equivalently, the maps
\begin{equation}\label{E:MapOmega}
E_n \rightarrow \Omega E_{n+1},
\end{equation}
where $\Sigma$ and $\Omega$ denote suspension and loop space, respectively.

If the maps \eqref{E:MapOmega} are weak equivalences, then $\underline{E}$ is called an \textit{$\Omega$-spectrum}. Each $E_n$ from an $\Omega$-spectrum is called an \textit{infinite loop space}.
\end{defin}


\begin{example}(Eilenberg-MacLane spectrum)\\
Let $n$ be an arbitrary positive integer and $G$ be an arbitrary group, abelian for $n>1$. Then there exists a $\text{CW}$ complex $X$ such that
\begin{equation}\label{E:EMcLaneDef}
\pi_n(X) \cong G \text{ and } \pi_k(X) \text{ is trivial for } k \neq n.
\end{equation}
A topological space $X$ with property \eqref{E:EMcLaneDef} is called an \textit{Eilenberg-MacLane space} $K(G,n)$. For example, $K(\Z,1) \simeq S^1$.\\
For an abelian group $G$, the \textit{Eilenberg-MacLane spectrum}, denoted by $\underline{H}G$, is defined to be the spectrum $\{E_n\}_{n\in \mathbb{N}_0}$  with $E_n=K(G,n+1)$ and maps
\begin{equation}\label{E:MapKOmega}
K(G,n+1) \rightarrow \Omega K(G,n+2).
\end{equation}
The maps \eqref{E:MapKOmega} are weak equivalences, 
hence $\underline{H}G$ is an $\Omega$-spectrum.
\end{example}

Since for a spectrum $\underline{E}$ there exist maps $$\pi_{i+n}(E_n) \rightarrow \pi_{i+n+1}(E_{n+1})$$ (for details, see \cite[Section 4.F]{Hatcher}), it makes sense to define the \textit{$i^{\text{th}}$ homotopy group of the spectrum} $\underline{E}$ as
$$\pi_i(\underline{E})=\colim_n \pi_{i+n} (E_n).$$

\begin{defin}
A \textit{map of spectra} $f: \underline{E} \rightarrow \underline{F}$ is a collection of maps $$f_n: E_n \rightarrow F_n, n \geq 0$$
that commute with the structure maps in $\underline{E}=\{E_n\}$ and $\underline{F}=\{F_n\}$.
\end{defin}
Taking spectra as objects and maps of spectra as morphisms we can define the \textit{category of spectra}. It is denoted by $\text{Spectra}$.

A spectrum can be smashed with a pointed space.
\begin{defin}
Let $\underline{E}=\{E_n\}$ be a spectrum and $X$ be a based space. The spectrum $\underline{E} \wedge X$ is defined by $$(\underline{E} \wedge X)_n=E_n \wedge X.$$
\end{defin}
Since $\Sigma(E_n \wedge X)\cong(\Sigma E_n) \wedge X$, the structure maps in the spectrum $\underline{E} \wedge X$  are the products of structure maps in $\underline{E}$ and the identity map.
For a spectrum $\underline{E} \wedge X$ the homotopy groups $\pi_i(\underline{E}\wedge X)$ are the groups $\colim_n \pi_{i+n} (E_n \wedge X)$. These groups define a generalized reduced homology theory, determined by $E$.

The following result is a consequence of Proposition 4F.2 in \cite{Hatcher}. See also \cite{Wh:GeneralizedHomology} for more details on representing generalized homology theories with spectra.
\begin{prop}
For the Eilenberg-MacLane spectrum $\underline{H} \Z$ there exists an isomorphism $$\pi_i(X \wedge \underline{H} \Z) \cong \widetilde{\Ho}_i(X;\Z).$$
\end{prop}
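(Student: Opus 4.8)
The plan is to recognize both sides as values of reduced homology theories on the category of pointed CW complexes, and then invoke the uniqueness theorem for such theories; this is essentially the route indicated by the reference to Proposition 4F.2 in \cite{Hatcher}. First I would verify that $h_i(X):=\pi_i(X\wedge\underline{H}\Z)$ is a reduced homology theory. The suspension isomorphism $h_i(X)\cong h_{i+1}(\Sigma X)$ follows from the natural identification $\Sigma(X\wedge\underline{H}\Z)\cong(\Sigma X)\wedge\underline{H}\Z$ used already in the definition of the smash of a spectrum with a space, together with the fact that $\pi_i(\underline{E})\cong\pi_{i+1}(\Sigma\underline{E})$ for any spectrum $\underline{E}$. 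Exactness for a pair $(X,A)$ comes from smashing the cofiber sequence $A\hookrightarrow X\to X/A$ with $\underline{H}\Z$: a cofiber sequence of pointed spaces becomes a cofiber, hence a fiber, sequence of spectra, and one passes to the long exact sequence of homotopy groups of spectra. The wedge (additivity) axiom holds because $\pi_*$ of a spectrum is defined as a filtered colimit and commutes with the relevant wedges. This package of properties is exactly the content of Proposition 4F.2 in \cite{Hatcher}.

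Next I would compute the coefficients. One has $h_i(S^0)=\pi_i(S^0\wedge\underline{H}\Z)=\pi_i(\underline{H}\Z)$, which by the construction of the Eilenberg--MacLane spectrum is $\Z$ in degree $0$ and trivial in all other degrees; thus $h_*$ satisfies the dimension axiom with coefficient group $\Z$. Since $\widetilde{\Ho}_*(-;\Z)$ is likewise a reduced homology theory on pointed CW complexes whose coefficients are $\Z$ concentrated in degree $0$, the uniqueness theorem for reduced homology theories (Eilenberg--Steenrod; see \cite{Hatcher} and \cite{Wh:GeneralizedHomology}) yields a natural isomorphism $h_i(X)\cong\widetilde{\Ho}_i(X;\Z)$ for every CW complex $X$.

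The point that requires real care is naturality of the isomorphism, as opposed to a mere abstract agreement of groups, and, relatedly, pinning down an explicit comparison map; this is where I expect the main (such as it is) obstacle to lie. The clean way around it is to exhibit a natural transformation between the two theories and then apply the comparison theorem: a natural transformation of homology theories that is an isomorphism on coefficient groups is an isomorphism on all CW complexes. Such a transformation is provided, for instance, by the Dold--Thom model $(\underline{H}\Z)_n\simeq SP^\infty(S^{n+1})$ together with the natural maps $X\wedge SP^\infty(S^{n})\to SP^\infty(X\wedge S^{n})=SP^\infty(\Sigma^{n}X)$, whose effect on $\pi_{n+i}$, in the colimit over $n$, is --- by Dold--Thom and the suspension isomorphism --- a natural map $\pi_i(X\wedge\underline{H}\Z)\to\widetilde{\Ho}_i(X;\Z)$; one then checks that it is an isomorphism for $X=S^0$ and concludes. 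Alternatively, as the statement indicates, one may simply quote Proposition 4F.2 of \cite{Hatcher} directly, which already packages this identification.
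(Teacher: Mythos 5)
Your proposal is correct and follows essentially the same route as the paper, which gives no independent argument but simply derives the statement from Proposition 4F.2 of Hatcher (spectra define reduced homology theories) together with the identification of the coefficients and the uniqueness/comparison theorem---exactly the package you spell out. Your extra care about naturality via a Dold--Thom comparison map is a reasonable elaboration of that citation rather than a different method.
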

If a spectrum $\underline{E}=\{E_n\}_{n \geq 0}$ is an $\Omega$-spectrum, then $\pi_n(\underline{E})$ is
 \begin{equation*}
    \pi_n(\underline{E})=
    \begin{cases}
     \hspace{0.23cm} \pi_n(E_0), & \text{for}\ n \geq 0 \\
      \pi_0(E_{-n}), & \text{for} \ n \leq 0
    \end{cases}
  \end{equation*}
  
Let us note that smash product with a spectrum can be extended from pointed to unpointed spaces.
\begin{defin}
Let $E$ be a spectrum and $X$ an unpointed space. Define the smash product of $E$ and $X$ to be the homotopy fiber of the map
$$E\wedge X_+ \to E$$
induced by the canonical map $X_+\to S^0$.
\end{defin}
For any choice of basepoint in $X$, there is a canonical equivalence between the new and the old definition $E\wedge X$. But the new definition does not depend on a choice of basepoint. This is a variant of the fact that reduced homology can be defined as relative homology to a basepoint, but also can be defined independently of basepoint, using the augmented chain complex.

However, it is also important to note that without a choice of basepoint in $X$, there is no natural map $X\to \Omega^\infty\HZ\wedge X$ representing the Hurewicz homomorphism. Such a map is defined only with a choice of basepoint.

We can assume that each spectrum is an $\Omega$-spectrum up to weak equivalence. Precisely, the following result holds.
\begin{prop}\label{P:equivalentOmega}
Every spectrum is weakly equivalent to an $\Omega$-spectrum.
\end{prop}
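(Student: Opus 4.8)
The plan is to give the classical ``$\Omega$-ification'' construction: starting from a spectrum $\underline{E}=\{E_n\}$ with adjoint structure maps $\sigma_n\colon E_n\to\Omega E_{n+1}$, I would build an $\Omega$-spectrum $\underline{E}'$ together with a map $\underline{E}\to\underline{E}'$, and then check that this map is a stable weak equivalence.

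First I would set $E'_n:=\hocolim_k \Omega^k E_{n+k}$, the mapping telescope of
\[
E_n \xrightarrow{\ \sigma_n\ } \Omega E_{n+1}\xrightarrow{\ \Omega\sigma_{n+1}\ } \Omega^2 E_{n+2}\xrightarrow{\ \Omega^2\sigma_{n+2}\ }\cdots .
\]
The inclusion of the base of the telescope gives a natural map $\iota_n\colon E_n\to E'_n$. Because $S^1$ is compact, $\Omega$ commutes with sequential homotopy colimits, so $\Omega E'_{n+1}\simeq \hocolim_k \Omega^{k+1}E_{n+1+k}$; the shift map of telescopes, whose $k$-th component is $\Omega^k\sigma_{n+k}$, then supplies structure maps $\sigma'_n\colon E'_n\to\Omega E'_{n+1}$, and naturality of the telescope construction makes $\underline{E}'=\{E'_n\}$ a spectrum and $\iota=\{\iota_n\}\colon\underline{E}\to\underline{E}'$ a map of spectra.

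Next I would compute homotopy groups. Since spheres are compact, $\pi_j$ commutes with mapping telescopes, so for every $n,j$
\[
\pi_j(E'_n)=\colim_k \pi_{j+k}(E_{n+k})=\colim_m \pi_{(j-n)+m}(E_m)=\pi_{j-n}(\underline{E}).
\]
Reading this off, the structure map $\sigma'_n$ induces on $\pi_j$ the identity between two cofinal presentations of the same sequential colimit, hence an isomorphism; thus $\underline{E}'$ is an $\Omega$-spectrum. Similarly $\iota_n$ induces on $\pi_j$ the canonical map $\pi_j(E_n)\to\colim_k\pi_{j+k}(E_{n+k})$, so $\iota$ induces an isomorphism $\pi_*(\underline{E})\xrightarrow{\cong}\pi_*(\underline{E}')$; that is, $\iota$ is a (stable) weak equivalence.

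The only point that needs genuine care --- the ``main obstacle'' --- is the bookkeeping that makes $\{\sigma'_n\}$ a strictly commuting system of structure maps and $\iota$ a strict (not just up-to-homotopy) map of spectra; this is handled by working with the standard point-set model of the mapping telescope and the evident identities, exactly as in the classical references. One should also keep in mind that ``weak equivalence'' here is the \emph{stable} notion, i.e.\ $\pi_*$-isomorphism of spectra, which is precisely what the homotopy-group computation above yields. Alternatively, one may invoke that $\Omega$-spectra are the fibrant objects of the stable model structure on spectra and take a fibrant replacement, but the telescope argument is the elementary one.
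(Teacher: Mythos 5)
Your proof is correct: the telescope construction $E'_n=\hocolim_k\Omega^k E_{n+k}$, with the canonical comparison map $\hocolim_k\Omega^{k+1}E_{n+1+k}\to\Omega E'_{n+1}$ (a weak equivalence by compactness of $S^1$) supplying the structure maps, is exactly the classical ``$\Omega$-ification'' argument, and your computation of $\pi_*$ via cofinal colimits correctly yields both that $\underline{E}'$ is an $\Omega$-spectrum and that $\iota$ is a $\pi_*$-isomorphism. The paper itself states this proposition as a standard fact without proof, so there is nothing to compare beyond noting that your argument is the standard one it implicitly relies on; the only genuinely delicate point, the direction and strictness of the structure maps on the telescope, is the one you flag and handle correctly.
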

If two spectra $\underline{E}$ and $\underline{F}$ are weak equivalent, we write $\underline{E} \simeq \underline{F}$.

Operation $\Sigma^{\infty}$ which assigns to a based space $X$ its suspension spectrum $\Sigma^{\infty}X$, defined by $E_n=\Sigma^n X$ with identities as structure maps, is a functor $$\Sigma^{\infty}: \Top_* \to \text{Spectra}.$$
Its adjoint functor $$\Omega^{\infty}: \text{Spectra} \to \Top_*$$
is defined to be the functor which takes a spectrum $\underline{E}=\{E_n\}_{n \geq 0}$, then replaces it by an equivalent $\Omega$-spectrum $\underline{F}=\{F_n\}_{n \geq 0}$ (which exists using proposition \ref{P:equivalentOmega}) and finally picks off the first place $F_0$. In short, $\Omega^{\infty}(\underline{E})=F_0$ where $\underline{F}=\{F_n\}_{n \geq 0} \simeq \underline{E}$. This $F_0$ is an infinite loop space, which explains the notation.

It follows from the results and comments above that $n^{\text{th}}$ homotopy group of a spectrum $\underline{E}$ equals the $n^{\text{th}}$ homotopy group of the space $\Omega^{\infty}(\underline{E})$.

Finally, let us mention that in addition to the smash product of a spectrum with a space, there is a very important notion of smash product of spectra. For our purposes, the most naive version of the construction suffices. Given two spectra $\underline{E}=\{E_n\}$ and $\underline{F}=\{F_n\}$, we define their smash product $\underline{E}\wedge \underline{F}$ by the formulas $(\underline{E}\wedge \underline{F})_{2n}=E_n\wedge F_n$, and $(\underline{E}\wedge \underline{F})_{2n+1}=E_{n+1}\wedge F_n$, with the structure maps being induced from the structure maps in $\underline{E}$ and $\underline{F}$ in the obvious way. The sphere spectrum is the unit (up to homotopy) for this smash product. 

One feature of smash product of spectra that plays a role in this paper is that unlike smash product of spaces, smash product with a fixed spectrum commutes with finite homotopy limits of spectra. More generally, it commutes with homotopy limits over a category whose classifying space is compact. This is discussed in some detail in~\cite{LRV:HolimsSmash}. The significance for us is that if $F$ is a good presheaf of spectra on $M$, and $\underline{E}$ is a fixed spectrum, then there are natural equivalences 
\[
\underline{E}\wedge T_kF\simeq T_k\underline{E}\wedge F
\]
and
\[
\underline{E} \wedge L_kF \simeq L_k \underline{E} \wedge F.
\] 
\end{subsection}
\end{section}

\vspace*{0.25cm}
\begin{section}{The homological Taylor tower for reduced $r$-immersions in $\mathbb R^n$}\label{S:main}
The main goal of this paper is to give a convergence result about the homological Taylor tower for the space of $r$-immersions of a smooth manifold $M$ in $\R^n$. As is often the case, when studying the homological tower, it is convenient to replace the functor of $r$-immersions by $r$-immersions ``modulo immersions''. This enables us to express the layers in the Taylor tower in terms of $r$-configuration spaces.


Let $M$ be a smooth manifold. Assume that a basepoint in the space $\Imm(M, \R^n)$ is chosen, and therefore the functor $U\mapsto \Imm(U, \R^n)$ is a presheaf of pointed spaces on $M$. Recall that for $U\subset M$, $\rbar(U, \R^n)$ denotes the homotopy fiber of the map $\rImm(U, \R^n)\to \Imm(U, \R^n)$. Let $\HZ$ denote the Eilenberg-Mac Lane spectrum. The functor 
\[
X\mapsto \HZ \wedge X
\]
represents reduced homology, in the sense that there is a natural isomorphism
\begin{equation}\label{E:homotopy=homology}
\pi_*(\HZ\wedge X )\cong \widetilde{\Ho}_*(X;\Z).
\end{equation}
Furthermore, recall that the functor can be extended to unpointed spaces, by defining $\HZ \wedge X$ for unpointed $X$ to be the homotopy fiber of the map $\HZ \wedge X_+ \to \HZ$.
In this paper we study the following functor
\[
\begin{array}{cccc}
\HZ\wedge \overline{\rImm}(-,\R^n) \colon & \calO(M) & \to &\text{Spectra} \\[5pt]
& U & \mapsto & \HZ \wedge \overline{\rImm}(U,\R^n)
\end{array}
\]
This functor is representing the homology of $\rbar(-, \R^n)$.
\begin{rem}\label{R:Quillen}
Instead of using spectra and the functor $\HZ\wedge -$ to represent homology, we could have used chain complexes and the singular chains functor. One reason for choosing spectra is their topological nature. The category of spectra, and of $\HZ$-module spectra, is tensored and cotensored over topological spaces, while the category of chain complexes is not. Of course, this is a minor technical issue that can be overcome, but anyway it was one reason for us to work with $\HZ$-modules rather than chain complexes. Another reason is that working with $\HZ$-modules readily points to generalizations. In particular, most of our results about the functor $\HZ \wedge \rbar(-, \R^n)$ can be extended to the functor $\Sigma^\infty \rbar(-, \R^n)$, which in turn can be used to obtain information about the unstable Taylor tower of $\rbar(-, \R^n)$.
\end{rem}
\begin{rem}
In \cite{GKW:EmbDisjSurg} and \cite{W:HomEmb}, Goodwillie, Weiss and Klein point out that for a contravariant functor $F: \mathcal{O}(M) \rightarrow \Top$, the cofunctor $\lambda_{\underline{J}}F$ given by $$U \mapsto F(U)_+ \wedge \underline{J}$$ for a fixed spectrum $\underline{J}$ is only "half-good", even if $F$ is good. Namely, it is an isotopy functor but it fails to be finitary. As mentioned in Section \ref{SS:manifoldcalculus}, to fix this they suggest to use the {\it taming} of $\lambda_{\underline{J}}F$. We will denote the taming of a functor such as $\lambda_{\underline{J}}F$ by $\lambda_{\underline{J}}F^\#$. The functor $\lambda_{\underline{J}}F^\#$ is a good cofunctor, and there is a natural transformation $\lambda_{\underline{J}}F\to \lambda_{\underline{J}}F^\#$, which is an equivalence when evaluated on a tame subset of $M$, where by a tame subset we mean an open subset which is diffeomorphic to the interior of a compact manifold with boundary. From now on, whenever we write $\HZ\wedge \overline{\rImm}(-,\R^n)$ we really mean the taming of this functor. In practice it makes no difference since we only are interested in evaluating our functors on tame manifolds.
\end{rem}

So we need to figure out the connectivity of the $k^{\text{th}}$ layer of the Taylor tower for the space $\HZ\wedge \overline{\rImm}(M,\R^n)$. 
By Proposition~\ref{P:ConnOfLayer}, this is determined by the homotopy fiber of the cubical diagram, indexed by subsets of $\{1, \ldots, k\}$,
\begin{equation*}
S\mapsto \HZ \wedge \rbar\left(\coprod_{\underline{k} \setminus S} D^m, \R^n\right).
\end{equation*}
There is a natural map $\rbar(\coprod_{\underline{k} \setminus S} D^m, \R^n)\to \rConf(\underline{k} \setminus S, \R^n)$, which is the composition of the natural map into $\rImm(\coprod_{\underline{k} \setminus S} D^m, \R^n)$, followed by evaluation at the centers of the discs. By the main result of~\cite{AS:rimmrconf}, this map is an equivalence. It follows that the connectivity of the layers of $\HZ\wedge \overline{\rImm}(M,\R^n)$ is determined by the connectivity of the total fiber of the cubical diagram
\begin{equation*}
S\mapsto \HZ \wedge \rConf(\underline{k} \setminus S, \R^n).
\end{equation*}
To analyze the total fiber of this cube, we need to review some facts about the homology of $r$-configuration spaces. This will be done in the next section.

\end{section}

\begin{section}{$r$-configuration spaces in $\mathbb R^n$ as complements of subspace arrangements}\label{S:GM}
We saw in the previous section that the convergence of the Taylor tower of the functor $\HZ \wedge \rbar(-, \R^n)$ is determined by the homology of $r$-configuration spaces $\rConf(k, \R^n)$. These configuration spaces can be interpreted as the complement of an arrangement of subspaces of $(\R^n)^k$. The combinatorics and topology (in particular, homology and cohomology) of subspace arrangements and their complements are well studied. 
Some of main references are \cite{Orlik1980}, \cite{Goresky1980}, \cite{Goresky1983}, \cite{GM1983}, \cite{Bjorner1990}. In particular, the (co)homology of $r$-configuration was studied from this perspective first by Bj\"orner and Welker in~\cite{Bjo95}, and by a number of people after that. In this section we review a qualitative description of the cohomology of $r$-configuration spaces, based on the Goresky-MacPherson formula. We will also describe the effect on cohomology of restriction maps between configuration spaces.

Recall that an $r$-configuration space of $k$ points in $\R^n$ is defined to be the space
$$
\rConf(k, \R^n)=\{(v_1,...,v_k) \in (\R^n)^k : \nexists 1\le i_1<\cdots < i_r \le k \text{ such that } v_{i_1}=...=v_{i_r} \}.
$$
The space $\rConf(k, \R^n)$ is an example of the complement of a subspace arrangement. Let us now recall some formal definitions. 
\begin{defin}
Suppose $I$ is an $r$-tuple of integers $I=(i_1, \ldots, i_r)$, where $1\le i_1<\cdots<i_r\le k$. Let us denote the set of all such $r$-tuples by $k\choose r$. Define
\[
A_I=\{(v_1, \ldots, v_k)\in (\R^n)^k\mid v_{i_1}=\cdots=v_{i_r} \}.
\]
Let $\mathcal{A}=\left\{A_I\mid I\in {k\choose r}\right\}$. When we need to make the set $k$ explicit, we write $\mathcal{A}_k$. More generally, for any set $T$ define $\mathcal{A}_T$ to be the set of ``$r$-equal'' diagonals in $(\R^n)^T$. 
\end{defin}

Note that one can identify $\rConf(k, \R^n)$ with the complement of the union of the $A_I$s:
\[
\rConf(k, \R^n)=(\R^n)^k \setminus \bigcup_{I\in {k\choose r}} A_I.
\]

\begin{example}\hspace*{10cm}
\begin{itemize}
\item If $k<r$, $\rConf(k,\R^n) \cong (\R^n)^k \simeq *$

\item If $k=r$, $\rConf(k,\R^n) \cong (\R^n)^r - \Delta \simeq S^{(r-1)n-1}$, where $\Delta$ is the thin diagonal in $ (\R^n)^r$ and $S^{(r-1)n-1}$ is the sphere of dimension $(r-1)n-1$.

\end{itemize}
\end{example}



The collection $\mathcal{A}$ of linear subspaces of $\R^{nk}$ is an example of a {\it subspace arrangement}. Recall that the {\it intersection lattice} of $\mathcal{A}$ is the poset $\mathcal{L}_{\mathcal A}$ consisting of all the intersections $A_{I_1}\cap\cdots\cap A_{I_t}$ of elements of $\mathcal{A}$, ordered by reverse inclusion. We include in $\mathcal{L}_{\mathcal A}$ the ``empty intersection'' of $A_I$s, which is $\R^{nk}$. The space $\R^{nk}$ is the minimal element of $\mathcal{L}_{\mathcal A}$. It will be denoted by $\hat 0$. The maximal element of $\mathcal{L}_{\mathcal A}$ is the intersection of all the $A_I$, which, assuming $k\ge r$, is the diagonal copy of $\R^n$ in $\R^{nk}$. We denote the maximal element of $\mathcal{L}_{\mathcal A}$ by $\hat 1$.

The poset $\mathcal{L}_{\mathcal A}$ is isomorphic to the poset $\Pi_{k, r}$ of partitions of $\{1, \ldots, k\}$ whose every block is either a singleton or contains at least $r$ elements. We call elements of $\Pi_{k,r}$ $r$-equal partitions of $\{1, \ldots, k\}$. The partitions are ordered from finer to coarser. The isomorphism $\Pi_{k, r}\to \mathcal{L}_{\mathcal A}$ sends a partition $\lambda$ of $\{1, \ldots, k\}$ to the space of $k$-tuples of vectors $(v_1, \ldots, v_k)\in (\R^n)^k$ with the property that $v_i=v_j$ whenever $i$ and $j$ are in the same block of $\lambda$. Equivalently, one can say that $\lambda$ is sent to the space of functions from $\underline{k}$ to $\R^n$ that are constant on each block of $\lambda$. From now on we will identify the posets $\mathcal{L}_{\mathcal A}$ and $\Pi_{k, r}$.


Because $L_{\calA}$ is a partially ordered set, we can define the open interval $(x,y)$ in $L_{\calA}$ to be the set
$$(x,y)=\{z \in L_{\calA} \ \ | \ \ x<z<y\}.$$
\begin{defin}
The \textit{order complex} $\Delta(x,y)$ of an open interval $(x,y)$ in $L_{\calA}$, is the abstract simplicial complex whose vertices are the elements of $(x,y)$ and whose $p$-simplices are the chains $x_0<...<x_p$ in $(x,y)$.
\end{defin}
Let $\widetilde{\Ho}_{i} \Delta(x,y)$ denote the $i^{\text{th}}$ reduced simplicial homology group of $\Delta(x,y)$ with integer coefficients. Similarly, $\widetilde{\Ho}^{i}\Delta(x,y)$ denotes the $i^{\text{th}}$ reduced cohomology group of $\Delta(x,y)$.

The (reduced) cohomology groups of the space $\rConf(k, \R^n)=\R^{nk}\setminus \bigcup_{I\in {k\choose r}} A_I$ can be described in terms of (reduced) homology groups of the order complex of intervals in the intersection lattice of $\calA$. This is known as the Goresky-MacPherson formula. For the original proof of the Goresky-MacPherson formula by means of stratified Morse theory see \cite[Part III]{Goresky1988}. An elementary proof was given by Ziegler and \v Zivaljevi\'c in \cite{ZZ1993}. For the original calculation of the cohomology $\rConf(k, \R^n)$ using the Goresky-MacPherson formula see~\cite{Bjo95}. Here is the statement, in the case relevant to us.
\begin{thm}[Special case of Goresky-MacPherson formula]\label{T:GMFormula}
There is an isomorphism
\begin{equation}\label{E:GMFormula}
\widetilde{\Ho}^{i}(\rConf(k, \R^n)) \cong \bigoplus_{x\in{L^{>\hat{0}}_{\calA}}} \widetilde{\Ho}_{\codim(x)-2-i}\Delta(\hat{0},x)
\end{equation}
\end{thm}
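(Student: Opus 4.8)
This is the classical Goresky-MacPherson formula specialized to the $r$-equal arrangement $\calA=\calA_k$ in $\R^{nk}$, so in the paper I would simply invoke it, citing \cite{Goresky1988} for the original stratified-Morse-theory proof and \cite{ZZ1993} for the elementary one; for completeness, here is the route I would sketch. The first step is to pass from the complement to the union. One-point compactifying, $S^{nk}=(\R^{nk})^+$, and since $\rConf(k,\R^n)=\R^{nk}\setminus V_\calA$ with $V_\calA=\bigcup_{I\in\binom{k}{r}}A_I$ a closed semialgebraic subset, its complement in the sphere is the one-point compactification $V_\calA^+=V_\calA\cup\{\infty\}$, a finite CW subcomplex of $S^{nk}$. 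Alexander duality then gives a natural isomorphism
\[
\widetilde{\Ho}^{i}(\rConf(k,\R^n))\;\cong\;\widetilde{\Ho}_{\,nk-1-i}(V_\calA^+).
\]
Using $\codim(x)=nk-\dim(x)$ and substituting $j=nk-1-i$, the asserted right-hand side $\bigoplus_{x}\widetilde{\Ho}_{\codim(x)-2-i}(\hat0,x)$ becomes $\bigoplus_{x}\widetilde{\Ho}_{\,j-\dim(x)-1}(\Delta(\hat0,x))$, so it remains to prove
\[
\widetilde{\Ho}_{j}(V_\calA^+)\;\cong\;\bigoplus_{x\in L_\calA^{>\hat0}}\widetilde{\Ho}_{\,j-\dim(x)-1}\bigl(\Delta(\hat0,x)\bigr).
\]

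The second step is to decompose $V_\calA^+$ over the intersection lattice. Each $x\in L_\calA$ is a linear subspace of $\R^{nk}$, so $x^+\cong S^{\dim x}$ (pointed at $\infty$), and the reverse-inclusion structure assembles $x\mapsto x^+$ into a diagram of pointed spaces over the poset $L_\calA^{>\hat0}$. Since $V_\calA^+$ is covered by the closed sets $A_I^+$ and every nonempty multiple intersection of these is again of the form $x^+$, a nerve-lemma / diagram-of-spaces argument in the style of Ziegler--\v Zivaljevi\'c identifies $V_\calA^+$ up to homotopy with the homotopy colimit of this diagram. Feeding this into the Mayer--Vietoris / homotopy-colimit spectral sequence and using that the reduced homology of each $x^+\cong S^{\dim x}$ is concentrated in the single degree $\dim(x)$, the spectral sequence degenerates quickly; the projection lemma for homotopy colimits (applied to the functor from the poset of subsets of atoms of $\calA$ down to $L_\calA$) then reorganizes the abutment as a direct sum over $x\in L_\calA^{>\hat0}$, with the $x$-summand being the reduced simplicial homology of the order complex $\Delta(\hat0,x)$ shifted up by $\dim(x)+1$. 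Tracing this back through Alexander duality gives the displayed formula; the bookkeeping (in particular the convention $\widetilde{\Ho}_{-1}(\emptyset)=\Z$ for the order complex of an empty interval) can be checked against the two base cases of the Example following this theorem, namely $k<r$, where $\rConf(k,\R^n)\simeq\ast$ and $L_\calA^{>\hat0}=\emptyset$, and $k=r$, where the unique element $x=\hat1$ has $\Delta(\hat0,\hat1)=\emptyset$ and $\codim(\hat1)=(r-1)n$, recovering $S^{(r-1)n-1}$.

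The heart of the argument, and the step I expect to be the real obstacle, is the middle one: verifying that the lattice-indexed diagram of spheres genuinely computes $\widetilde{\Ho}_\ast(V_\calA^+)$, and that the ensuing spectral sequence collapses so that each lattice element $x$ contributes precisely $\widetilde{\Ho}_\ast(\Delta(\hat0,x))$ up to the stated shift --- in other words, the passage from the nerve of the cover by the $A_I$ to the order complexes of intervals in the intersection poset. This combinatorial-topological core is exactly the content of the Goresky-MacPherson theorem, whose hypotheses (a finite real subspace arrangement) the $r$-equal arrangement plainly satisfies, so rather than reproving it I would cite \cite{Goresky1988} and \cite{ZZ1993}, and for the specific case of $r$-configuration spaces also \cite{Bjo95}.
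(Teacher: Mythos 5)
Your proposal matches the paper: the paper offers no proof of this theorem, treating it as the classical Goresky--MacPherson formula specialized to the $r$-equal arrangement and citing \cite{Goresky1988}, \cite{ZZ1993} and \cite{Bjo95}, exactly as you do. Your supplementary sketch (Alexander duality plus the homotopy-colimit/wedge decomposition of $V_\calA^+$ over the intersection lattice) is the standard Ziegler--\v Zivaljevi\'c route the paper points to, and its index bookkeeping is consistent with \eqref{E:GMFormula}.
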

Here, the direct sum is indexed by all $x\neq \hat{0}$ in $L_{\calA}$, and $\codim(x)$ is the codimension of the space $x$ as the subspace of $\R^{nk}$.

For each diagonal $x\in {\mathcal L}_{\mathcal A}$, let $c(x)$ denote the number of components of the partition of $\underline{k}$ which determines the diagonal $x$. Obviously, dimension of $x$ in $(\R^n)^k$ is $\dim(x)=n\cdot c(x)$, so 
\begin{equation}\label{eq: codimension}
\codim(x)=n(k-c(x)).
\end{equation}
The following easy example of $3$-configuration spaces of $4$ points illustrates the application of formula \eqref{E:GMFormula}.

\begin{example} Let $\calA$ is the set of all $\text{(at least } 3) \text{-diagonals in } (\R^n)^4$. Then $3\Conf(4,\R^n)=(\R^n)^4-\calA$. The intersection lattice $L_{\calA}$ of $\calA$ is pictured in Figure \ref{fig:3Conf4}. 
Using Theorem~\ref{T:GMFormula}, we find that for every $n> 1$, $$\Ho^{0}(3\Conf(4,\R^n))\cong \Z,$$ $$\Ho^{2n-1}(3\Conf(4,\R^n))\cong \Z^4,$$ $$\Ho^{3n-2}(3\Conf(4,\R^n))\cong \Z^3,$$ and other cohomology groups are trivial.
For $n=1$, the formula is still valid, except that in this case $2n-1=3n-2=1$, so the two cohomology groups add together. So $\Ho^{0}(3\Conf(4,\R))\cong \Z$ and $\Ho^{1}(3\Conf(4,\R))\cong \Z^7.$ For $n=1, 2$, the cohomology of $3\Conf(4,\R^n)$ can be read off the tables at the end of~\cite{Bjo95}.

\end{example}

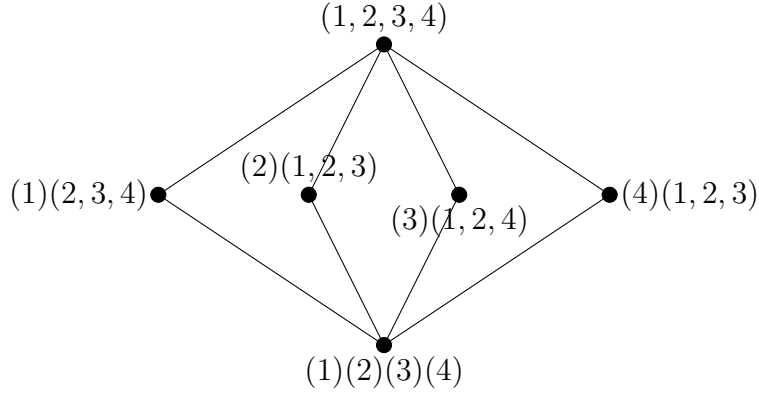
\begin{figure}
\begin{center}
\begin{tikzpicture}
\draw (0,0) node[below] {$(1)(2)(3)(4)$} -- (-3,2) node[left] {$(1)(2,3,4)$}-- (0,4) node[above] {$(1,2,3,4)$};
\draw (0,0)--(-1,2) node[above] {$(2)(1,2,3)$}--(0,4); \draw (0,0)--(1,2) node[below] {$(3)(1,2,4)$}--(0,4);\draw (0,0)--(3,2) node[right] {$(4)(1,2,3)$}--(0,4); \draw[fill] (0,0) circle [radius=0.1]; \draw[fill] (-3,2) circle [radius=0.1]; \draw[fill] (-1,2) circle [radius=0.1]; \draw[fill] (1,2) circle [radius=0.1]; \draw[fill] (3,2) circle [radius=0.1]; \draw[fill] (0,4) circle [radius=0.1]; 
\end{tikzpicture}
\end{center}
\caption{Intersection lattice for $3\Conf(4,\R^n)$, also known as $\Pi_{4,3}$}
\label{fig:3Conf4}
\end{figure}

For the purpose of analysing the layers in the homological Taylor tower for $r$-immersions it also is desirable to know the effect of restriction maps between $r$-configuration spaces on cohomology. Suppose we have a subset $T\subset \{1, \ldots, k\}$. Then we have a restriction map $\rConf(\underline{k}, \R^n)\to \rConf(T, \R^n)$. We want to describe the induced homomorphism on cohomology, in terms of formula~\eqref{E:GMFormula}. The inclusion $T\hookrightarrow \{1, \ldots, k\}$ induces an inclusion of the poset of $r$-equal partitions of $T$ into the poset of $r$-equal partitions of $\{1, \ldots, k\}$, by making each element of $\{1, \ldots, k\}\setminus T$ into a singleton. Notice that for every $r$-equal partition of $T$, the codimension of the corresponding diagonal is the same whether it is considered a diagonal in $(\R^n)^T$ or in $(\R^n)^k$. This is so because the codimension of a diagonal determined by a partition is determined by the difference between the cardinality of the set and the number of blocks of the partition, by formula~\eqref{eq: codimension}. This number {\it remains unchanged} if one adds some singletons to a partition. Thus we have a homomorphism
\begin{equation}\label{eq: projection}
\bigoplus_{x\in{L^{>\hat{0}}_{{\calA}_T}}} \widetilde{\Ho}_{\codim(x)-2-i}\Delta(\hat{0},x)\to \bigoplus_{x\in{L^{>\hat{0}}_{{\calA}_k}}} \widetilde{\Ho}_{\codim(x)-2-i}\Delta(\hat{0},x)
\end{equation}
which is defined by the inclusion $L^{>\hat{0}}_{{\calA}_T}\hookrightarrow L^{>\hat{0}}_{{\calA}_k}$, and uses the fact that for every $x\in {L^{>\hat{0}}_{{\calA}_T}}$, the number $\codim(x)$ is the same whether $x$ is considered an element of $L^{>\hat{0}}_{{\calA}_T}$ or of $L^{>\hat{0}}_{{\calA}_k}$.
\begin{lemma}\label{lemma: naturality}
The homomorphism $\widetilde{\Ho}^{i}(\rConf(T, \R^n))\to \widetilde{\Ho}^{i}(\rConf(k, \R^n))$ corresponds, under the isomorphism~\eqref{E:GMFormula}, to the homomorphism~\eqref{eq: projection} that we just described.
\end{lemma}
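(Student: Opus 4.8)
To prove the lemma I would first realise the restriction map as an open inclusion of complements of nested arrangements, and then read off the statement from the naturality of the construction behind the Goresky--MacPherson formula.

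The first step uses the splitting $(\R^n)^k=(\R^n)^T\times(\R^n)^{\underline k\setminus T}$. For $I\in\binom{T}{r}\subseteq\binom{k}{r}$ the diagonal $A_I\subseteq(\R^n)^k$ has the form $A^{(T)}_I\times(\R^n)^{\underline k\setminus T}$, so the sub-arrangement $\mathcal{A}^T:=\{A_I\mid I\in\binom{T}{r}\}$ of $\mathcal{A}_k$ has complement $\rConf(T,\R^n)\times(\R^n)^{\underline k\setminus T}$, and projecting off the second factor is a homotopy equivalence $p\colon M_{\mathcal{A}^T}\xrightarrow{\ \simeq\ }\rConf(T,\R^n)$. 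One checks directly that the restriction map $\rConf(\underline k,\R^n)\to\rConf(T,\R^n)$ is the composite of the open inclusion $\rConf(\underline k,\R^n)=M_{\mathcal{A}_k}\hookrightarrow M_{\mathcal{A}^T}$ with $p$. By the discussion preceding the statement, $L_{\mathcal{A}^T}$ is the full subposet of $L_{\mathcal{A}_k}$ spanned by the diagonals of $r$-equal partitions of $T$; it is canonically isomorphic to $\Pi_{|T|,r}=L_{\mathcal{A}_T}$; and for each $x\in L_{\mathcal{A}^T}^{>\hat 0}$ both $\codim(x)$ and the open interval $(\hat 0,x)$ coincide whether computed in $L_{\mathcal{A}^T}$ or in $L_{\mathcal{A}_k}$ (any $z$ with $\hat 0<z<x$ in $L_{\mathcal{A}_k}$ refines the partition attached to $x$, hence is a singleton outside $T$, hence already lies in $L_{\mathcal{A}^T}$). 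Hence it suffices to check two naturality properties of the isomorphism~\eqref{E:GMFormula}: (i) it is unchanged under crossing an arrangement with a Euclidean factor, which identifies $p^*$ with the tautological isomorphism of the right-hand sides (here $N$ and $\dim x$ increase but $\codim(x)$ and $(\hat 0,x)$ do not, so~\eqref{E:GMFormula} reads the same); and (ii) for a sub-arrangement $\mathcal{B}\subseteq\mathcal{A}$ of $\R^N$ in which every interval $(\hat 0,x)$, $x\in L_{\mathcal{B}}^{>\hat 0}$, is the same in $L_{\mathcal{B}}$ as in $L_{\mathcal{A}}$, the restriction $\widetilde{\Ho}^i(M_{\mathcal{B}})\to\widetilde{\Ho}^i(M_{\mathcal{A}})$ induced by $M_{\mathcal{A}}\subseteq M_{\mathcal{B}}$ is, under~\eqref{E:GMFormula}, the inclusion of the summands indexed by $L_{\mathcal{B}}^{>\hat 0}\subseteq L_{\mathcal{A}}^{>\hat 0}$. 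Granting (i) and (ii), composing $\iota^*\circ p^*$ gives exactly~\eqref{eq: projection}.

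To get (i) and (ii) I would run a manifestly natural derivation of~\eqref{E:GMFormula}. One convenient route: identify $M_{\mathcal{A}}=\R^N\setminus\bigcup\mathcal{A}$ with $S^N\setminus(\bigcup\mathcal{A})^+$, so that a sub-arrangement inclusion $\mathcal{B}\subseteq\mathcal{A}$ corresponds to the inclusion of compacta $(\bigcup\mathcal{B})^+\hookrightarrow(\bigcup\mathcal{A})^+$; by naturality of Alexander duality the restriction homomorphism becomes $\widetilde{\Ho}_{N-i-1}((\bigcup\mathcal{B})^+)\to\widetilde{\Ho}_{N-i-1}((\bigcup\mathcal{A})^+)$. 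Then invoke the Ziegler--\v Zivaljevi\'c description \cite{ZZ1993}, which presents $(\bigcup\mathcal{A})^+$ up to weak equivalence as a homotopy colimit of a diagram built functorially from the poset $L_{\mathcal{A}}$ and the subspaces $x\subseteq\R^N$; a sub-arrangement inclusion is covered by the inclusion of this diagram over the full subposet $L_{\mathcal{B}}\hookrightarrow L_{\mathcal{A}}$. Passing to homology and using the associated splitting $\widetilde{\Ho}_*((\bigcup\mathcal{A})^+)\cong\bigoplus_{x\in L_{\mathcal{A}}^{>\hat 0}}\widetilde{\Ho}_{*-\dim x-1}(\Delta(\hat 0,x))$, the induced map becomes the evident ``restrict to the subposet'' map, which --- under the interval hypothesis of (ii) --- is the inclusion of summands. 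Property (i) is the special case $\mathcal{B}=\mathcal{A}$ followed by crossing with $\R^d$: one-point compactification turns $\bigcup\mathcal{A}\times\R^d$ into $\Sigma^d(\bigcup\mathcal{A})^+$, and~\eqref{E:GMFormula} is left unchanged.

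The substance of the argument lies entirely in this last step: promoting a known proof of~\eqref{E:GMFormula} to an honest natural isomorphism. The point I expect to be delicate is fixing a strictly functorial model for the homotopy colimit presenting $(\bigcup\mathcal{A})^+$ (Bousfield--Kan will do) and verifying that the direct-sum splitting of its homology is compatible with restriction to a full subposet, so that no off-diagonal terms appear; this is precisely where the combinatorial observation of the first step --- that $(\hat 0,x)$ is the same interval in $L_{\mathcal{A}^T}$ as in $L_{\mathcal{A}_k}$ --- enters. Should the homotopy-colimit bookkeeping prove cumbersome, an alternative is to use the Mayer--Vietoris (nerve) spectral sequence of the closed cover $\{A_I\}$ of $\bigcup\mathcal{A}$, which is visibly natural in the arrangement and, after the Goresky--MacPherson analysis, degenerates onto the right-hand side of~\eqref{E:GMFormula}; its naturality then yields the lemma. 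Sign and orientation choices in Alexander duality and in~\eqref{E:GMFormula} are routine and immaterial to the statement.
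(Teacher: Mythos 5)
Your argument is correct and follows essentially the same route as the paper: the paper disposes of the lemma in one line by invoking the naturality of the Goresky--MacPherson isomorphism with respect to inclusions of subarrangements (Hu, Corollary 2.1), which is precisely the naturality statement (your step (ii)) that you set out to establish via the Ziegler--\v Zivaljevi\'c homotopy colimit model and Alexander duality. Your preliminary reduction --- factoring the restriction map through the complement of the subarrangement $\mathcal{A}^T\subseteq\mathcal{A}_k$ inside $(\R^n)^k$ and checking that codimensions and the intervals $(\hat 0,x)$ are unchanged --- is exactly the bookkeeping the paper leaves implicit, so the only real difference is that you reprove the cited naturality rather than quote it.
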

\begin{proof}
This follows easily from the fact that the Goresky-MacPherson formula is natural with respect to inclusions of subarrangements~\cite[Corollary 2.1]{Hu:Arrangements}
\end{proof}
\end{section}

\section{Total fiber of a retractive cubical diagram}\label{S:Retractive}
In general homotopy groups do not commute with total homotopy fibers of cubical diagrams. In this section we will show that for a class of cubes that we call {\it retractive} they do commute. More precisely, we show that for retractive cubes, the homotopy groups of the total fiber are canonically isomorphic to the total kernel of the cube of homotopy groups.


Suppose we have a two-dimensional cubical diagram of spaces or spectra
\begin{equation}\label{E:SquareSpectra}
\xymatrix{
\underline{E_{\emptyset}} \ar[r]^{i_{\emptyset, 1}}\ar[d]_{i_{\emptyset, 2}} & \underline{E_1}\ar[d]^{i_{1, 12}} \\
\underline{E_2} \ar[r]_{i_{2, 12}} & \underline{E_{12}}
}
\end{equation}
Suppose that all the maps in the square \eqref{E:SquareSpectra} have homotopy sections, so that the square of sections
$$
\xymatrix{
\underline{E_{12}} \ar[r]^{s_{12, 1}}\ar[d]_{s_{12,2}} & \underline{E_1}\ar[d]^{s_{1, \emptyset}} \\
\underline{E_2} \ar[r]_{s_{2, \emptyset}} & \underline{E_{0}}
}
$$
commutes up to homotopy, and so that the following mixed square
$$
\xymatrix{
\underline{E_{2}} \ar[r]^{i_{2, 12}}\ar[d]_{s_{2, \emptyset}} & \underline{E_{12}}\ar[d]^{s_{12, 1}} \\
\underline{E_0} \ar[r]_{i_{\emptyset, 1}} & \underline{E_{1}}
}
$$
also commutes up to homotopy. Note that the vertical maps in the mixed square are sections, while the horizontal maps are from the original square.

Let us call a square \eqref{E:SquareSpectra} with such sections a \textit{retractive square}.

More generally, let us define a retractive cubical diagram as follows. 
\begin{defin}
Let $\chi$ be a $k$-dimensional cubical diagram. We say that $\chi$ is \textit{retractive} if for every $U\subset \{1, \ldots, k\}$ and every $i\notin U$, the map $\chi(U)\to \chi(U\cup \{i\})$ has a homotopy section, the cube of sections commutes up to homotopy, and furthermore whenever $U\subset \{1, \ldots, k\}$, and $i, j\in \{1, \ldots, k\}\setminus U$, with $i<j$, the following mixed square commutes up to homotopy
 $$
\xymatrix{
\chi(U\cup \{j\}) \ar[r]\ar[d] & \chi(U\cup \{i, j\}) \ar[d] \\
\chi(U) \ar[r] & \chi(U\cup\{i\})
}.
$$
\end{defin}

\begin{lemma}\label{L:commute}
Let $\chi$ be a retractive $k$-dimensional cubical diagram of spectra. Let $E_*$ be any homology theory, and let $E^*$ be a cohomology theory. Then $E_*(\tfiber\chi)$ (resp. $E^*(\tfiber \chi)$) is a direct summand of $E_*(\chi(\emptyset))$ (resp. of $E^*(\chi(\emptyset))$). Moreover, the following natural homomorphism is an isomorphism:
\[
E_*(\tfiber\chi)\xrightarrow{\cong} \tkernel\left(E_*\chi\right).
\]
Similarly, there is a natural isomorphism
\[
\tcokernel(E^*\chi)\xrightarrow{\cong} E^*(\tfiber\chi).
\]
\end{lemma}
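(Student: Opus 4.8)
The plan is to induct on the dimension $k$ of the cube. Write $\chi$ as a map of $(k-1)$-cubes $\chi_1\to\chi_2$ as in Subsection~\ref{SS:cubicaldiagrams} (so $\chi_1(U)=\chi(U)$ and $\chi_2(U)=\chi(U\cup\{k\})$), and use the fibration sequence $\tfiber(\chi)\simeq\hofiber\big(\tfiber(\chi_1)\to\tfiber(\chi_2)\big)$ together with the iterated-kernel formula $\tkernel(\chi)\cong\ker\big(\tkernel(\chi_1)\to\tkernel(\chi_2)\big)$. Throughout I use that a fibration sequence of spectra is also a cofibration sequence, so any homology theory $E_*$ (resp.\ cohomology theory $E^*$) takes it to a long exact sequence.

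For $k=1$, $\chi$ is a single map $f\colon\chi(\emptyset)\to\chi(\{1\})$ equipped with a homotopy section $s$, and $\tfiber(\chi)=\hofiber(f)$. Since $f_*s_*\simeq\mathrm{id}$, the map $f_*$ is split surjective, so the long exact sequence of $\tfiber(\chi)\to\chi(\emptyset)\xrightarrow{f}\chi(\{1\})$ collapses into split short exact sequences $0\to E_*(\tfiber\chi)\to E_*(\chi(\emptyset))\to E_*(\chi(\{1\}))\to 0$. Hence $E_*(\tfiber\chi)$ is a direct summand of $E_*(\chi(\emptyset))$ and the natural map identifies it with $\ker(f_*)=\tkernel(E_*\chi)$. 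Dually, the same sequence is a cofibration sequence and $f^*$ is split injective, so $E^*(\tfiber\chi)\cong\coker(f^*)=\tcokernel(E^*\chi)$, again as a direct summand.

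For the inductive step, first note that $\chi_1$ and $\chi_2$ are again retractive: for $\chi_1$ this is the restriction of the retractiveness data of $\chi$ to subsets of $\underline{k-1}$, and for $\chi_2$ one uses the sections $s_{U\cup\{k\},i}$ together with the mixed-square hypothesis of $\chi$ at the subsets $U\cup\{k\}$. The key observation is that the direction-$k$ sections $s_{U,k}\colon\chi(U\cup\{k\})\to\chi(U)$ assemble into a map of $(k-1)$-cubes $\sigma\colon\chi_2\to\chi_1$: commutativity of $\sigma$ with the structure map that adds an index $i<k$ is exactly the mixed-square hypothesis of $\chi$ with $j=k$. Since $f_U\circ s_{U,k}\simeq\mathrm{id}$, the cube map $\sigma$ is a homotopy section of $f\colon\chi_1\to\chi_2$, hence $\tfiber(\sigma)$ is a homotopy section of $\tfiber(f)$. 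Now apply $E_*$ to the fibration sequence $\tfiber(\chi)\to\tfiber(\chi_1)\xrightarrow{\tfiber(f)}\tfiber(\chi_2)$: as before the long exact sequence splits, so $E_*(\tfiber\chi)$ is a direct summand of $E_*(\tfiber\chi_1)$ and equals $\ker\big(E_*(\tfiber\chi_1)\to E_*(\tfiber\chi_2)\big)$. By the inductive hypothesis $E_*(\tfiber\chi_1)$ is a direct summand of $E_*(\chi(\emptyset))$ — so $E_*(\tfiber\chi)$ is too — and, using the naturality clause of the inductive hypothesis applied to the cube map $f$, this kernel is identified with $\ker\big(\tkernel(E_*\chi_1)\to\tkernel(E_*\chi_2)\big)=\tkernel(E_*\chi)$ by the iterated-kernel formula. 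A diagram chase through the two fibration sequences shows the resulting isomorphism is the natural map $E_*(\tfiber\chi)\to\tkernel(E_*\chi)$. The cohomology statement is proved identically, using instead that $\tfiber(f)^*$ is split injective and the cokernel analogue of the iterated-kernel formula.

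The one delicate point — and the main obstacle — is that the sections in a retractive cube are only prescribed up to homotopy, so some care is needed to make $\sigma$ an honest map of cubes (so that $\tfiber(\sigma)$ is defined and is a section of $\tfiber(f)$) and to ensure the inductively built isomorphism coincides with the asserted natural map. In the naive model of spectra used here this is handled by rectifying the homotopy-commutative data or by a direct coherence argument; the mixed-square axiom in the definition of a retractive cube is included precisely so that the direction-$k$ sections organize into a compatible cube map. Everything else is routine long-exact-sequence bookkeeping.
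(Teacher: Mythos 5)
Your overall strategy — induction on $k$, the split long exact sequence in the base case, writing $\chi$ as a map of $(k-1)$-cubes $\chi_1\to\chi_2$, and combining the fibration sequence of total fibers with the iterated-kernel formula — is exactly the paper's argument. But the inductive step contains a genuine gap at the one point you yourself flag: you want the direction-$k$ sections $s_{U,k}\colon\chi(U\cup\{k\})\to\chi(U)$ to assemble into a map of cubes $\sigma\colon\chi_2\to\chi_1$, so that $\tfiber(\sigma)$ is defined and is a homotopy section of $\tfiber(\chi_1)\to\tfiber(\chi_2)$. The definition of a retractive cube does not supply this. The mixed-square axiom only says each relevant square commutes \emph{up to homotopy}, with no choice of homotopies and no higher coherences; rectifying such objectwise-homotopy-commutative data into an honest (or coherently homotopy-commutative) map of $(k-1)$-cubes requires coherence data that is simply not part of the hypotheses, and your claim that ``the mixed-square axiom is included precisely so that the direction-$k$ sections organize into a compatible cube map'' is not correct as stated. (The same issue recurs when you need $f\circ\sigma$ to be homotopic to the identity \emph{as a map of cubes}, not just objectwise, in order to conclude that $\tfiber(\sigma)$ splits $\tfiber(f)$.)

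The paper's proof is designed to sidestep exactly this: it never builds a spectrum-level section of $\tfiber(\chi_1)\to\tfiber(\chi_2)$. Instead it observes that although the retractions do not define a map of cubes of spectra, after applying $E_*$ they \emph{do} define an honest homomorphism of cubes of graded abelian groups $E_*\chi_2\to E_*\chi_1$ splitting $E_*\chi_1\to E_*\chi_2$ (homotopy-commutativity is all one needs for strict commutativity on homology). This gives a section of $\tkernel(E_*\chi_1)\to\tkernel(E_*\chi_2)$, and combined with the inductive isomorphisms $E_*\tfiber\chi_i\cong\tkernel(E_*\chi_i)$ it yields the split surjectivity of $E_*\tfiber\chi_1\to E_*\tfiber\chi_2$ that your argument was trying to obtain via $\tfiber(\sigma)$. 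With that substitution the rest of your bookkeeping (splitting of the long exact sequence, identification of $E_*\tfiber\chi$ with $\ker(\tkernel E_*\chi_1\to\tkernel E_*\chi_2)=\tkernel(E_*\chi)$, and the dual cohomological statement) goes through and agrees with the paper. So the gap is repairable, but the repair is precisely to abandon the spectrum-level section of total fibers and work with the induced maps on $E_*$ throughout.
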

\begin{proof}
We will prove the claim for homology. The proof of the cohomological statement is the same, reversing all arrows. The proof is by induction on $k$, starting with with the case $k=1$, which is elementary and well-known. Let us review it anyway. A retractive $1$-dimensional cube is a map $\chi(\emptyset) \to \chi(1)$, together with a homotopy section $\chi(1)\to \chi(\emptyset)$. The total fiber of the cube is the homotopy fiber of the map $\chi(\emptyset) \to \chi(1)$. By homotopy section we mean that the composition $\chi(1) \to \chi(\emptyset)\to \chi(1)$ is a weak equivalence. It follows that the composition $E_*\chi(1) \to E_*\chi(\emptyset)\to E_*\chi(1)$ is an isomorphism. From here it readily follows that the long exact sequence in $E_*$ associated with the fibration sequence $\tfiber\chi\to \chi(\emptyset)\to \chi(1)$ splits as a direct sum of split short exact sequences in each degree. Furthermore it readily follows that the following homomorphisms are isomorphisms
\[
E_*\tfiber\chi\xrightarrow{\cong} \ker\left(E_*\chi(\emptyset)\to E_*\chi(1)\right) \xrightarrow{\cong} \coker\left(E_*\chi(1)\to E_*(\chi(\emptyset))\right).
\]

Now suppose the lemma holds for cubes of dimension less than $k$ and let $\chi$ be a retractive cube of dimension $k$. Let $\chi_1$ and $\chi_2$ be $k-1$-dimensional cubes defined by $\chi_1(U)=\chi(U)$ and $\chi_2(U)=\chi(U\cup \{k\})$. Then $\chi$ can be identified with the natural map of cubes $\chi_1\to \chi_2$. The cubes $\chi_1$ and $\chi_2$ are retractive, so by induction hypothesis, the lemma holds for them. The retractions do not quite define a map of cubes $\chi_2\to \chi_1$, because we only assumed that the mixed squares commute up to homotopy. But they do define a homomorphism of cubes $E_*\chi_2\to E_*\chi_1$, which is a section of the homomorphism of cubes $E_*\chi_1\to E_*\chi_2$. We have the following diagram

\[
\xymatrix{
 {E_*\tfiber \chi}	\ar[r] & {E_*\tfiber\chi_1} \ar[r]\ar[d]^\cong & {E_*\tfiber\chi_2} \ar[d]^\cong \\
{\tkernel E_*\chi_2} \ar[r] \ar@/_1pc/[rr]_\cong& {\tkernel E_*\chi_1} \ar[r] & {\tkernel E_*\chi_2}
}
\]

The top row is induced by applying $E_*$ to a fibration sequence of spectra. The vertical homomorphisms are isomorphisms by induction hypothesis. It follows that the upper right homomorphism is a split surjection, and the top row is a split short exact sequence in each dimension. Furthermore, $E_*\tfiber\chi$ maps isomorphically onto the kernel of the bottom right map, which is $\tkernel E_*\chi$.
\end{proof}
\section{The cube of $r$-configuration spaces is retractive}\label{S: Conf is retractive}
\begin{lemma}\label{lemma: config retractive}
The $k$-cube of spaces $$S \mapsto \rConf(\underline{k} \setminus S, \R^n) $$ is retractive for $n\ge 2$.
\end{lemma}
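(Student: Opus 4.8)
The plan is to construct the homotopy sections explicitly, by re-inserting a point ``far away'' along the first coordinate and at a label-dependent location in the remaining coordinates, and then to realize all the coherence conditions by elementary straight-line homotopies in that first coordinate. Fix the splitting $\R^n=\R\times\R^{n-1}$; using $n\ge 2$, choose pairwise distinct points $p_1,\dots,p_k\in\R^{n-1}$. For $T\subseteq\underline{k}$ and $v=(v_l)_{l\in T}\in\rConf(T,\R^n)$ set $M(v):=1+\max_{l\in T}(v_l)_1$, where $(v_l)_1$ is the first coordinate (with $M(v):=0$ when $T=\emptyset$). With $T=\underline{k}\setminus U$, the cube map $\chi(U)\to\chi(U\cup\{i\})$ is the projection $\rConf(T,\R^n)\to\rConf(T\setminus\{i\},\R^n)$ forgetting the $i$-th point; I would take as its section the map $w\mapsto(w,(M(w),p_i))$ that re-inserts a point with first coordinate $M(w)$ and remaining coordinates $p_i$. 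This is continuous, its composite with the projection is the identity, and it lands in $\rConf(T,\R^n)$ because the inserted point has strictly larger first coordinate than every $w_l$ and is therefore distinct from all of them; since $r\ge 2$, a point differing from all the others lies in no $r$-fold diagonal, so no new $r$-equal coincidence is created.

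Then I would verify the two coherence conditions. On a $2$-face of the cube of sections (fix $U$, and $i\ne j$ in $\underline{k}\setminus U$), inserting the two labels in either order, starting from $v\in\rConf(\underline{k}\setminus(U\cup\{i,j\}),\R^n)$, yields the old configuration $v$ with one label placed at first coordinate $M(v)$ and the other at first coordinate $1+M(v)$, the two orders differing only in which label goes where. Since $p_i\ne p_j$, sliding the first coordinate of point $i$ linearly from one value to the other while sliding that of point $j$ oppositely is a homotopy inside the relevant $r$-configuration space --- throughout it the two inserted points keep their distinct heights $p_i,p_j$ in $\R^{n-1}$ and both keep first coordinate $\ge M(v)$, hence stay distinct from each other and from all points of $v$ --- and it is visibly natural in $v$. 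The mixed square is treated the same way: the composites ``insert $j$, then forget $i$'' and ``forget $i$, then insert $j$'' agree except that point $j$ sits at first coordinate $M(v)$ in one and at $M(v|_{T\setminus\{i\}})$ in the other, and since both numbers exceed the first coordinate of every retained point of $v$, linearly interpolating the first coordinate of point $j$ between them is again a natural homotopy in the $r$-configuration space.

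I do not expect a genuine obstacle here: the work is bookkeeping --- translating the abstract maps $\chi(U)\to\chi(U\cup\{i\})$ into ``forget/insert the $i$-th point'' and keeping track of index sets --- together with the trivial cases $k\le 1$ and empty configurations. The one essential use of the hypothesis $n\ge 2$ is the existence of the distinct auxiliary points $p_i$, i.e.\ enough room in $\R^{n-1}$ to separate two simultaneously-inserted points by their labels; this is exactly what makes all the coherence homotopies available. (The sections alone would exist even for $n=1$, so $n\ge 2$ is precisely what upgrades ``sectioned'' to ``retractive''.)
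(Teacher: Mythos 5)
Your proof is correct and follows essentially the same route as the paper: insert the new point with first coordinate one more than the current maximum (so the composite with the forgetful map is the literal identity), and verify the section squares and the mixed squares by elementary homotopies, with $n\ge 2$ providing the room to swap the two inserted points without collision. The only cosmetic difference is that the paper places every inserted point on the first-coordinate axis and swaps the two new points along a circle in the plane of the first two coordinates, whereas you separate them by distinct labels $p_i\in\R^{n-1}$ and interpolate the first coordinates linearly.
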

\begin{proof}
Let $T$ be a finite set and suppose $x\notin T$. Our first step it to construct a section to the restriction map
\[
r_{T\cup\{x\}, T}\colon \rConf(T\cup\{x\}, \R^n)\to \rConf(T, \R^n).
\]
Let $p_1\colon \R^n \to \R$ be projection onto the first coordinate. Define a map
\[
s_{T, T\cup\{x\}}\colon \rConf(T, \R^n) \to \rConf(T\cup\{x\}, \R^n)
\]
as follows. An element of $\rConf(T, \R^n)$ is a function $f\colon T\to \R^n$ with the property that no $r$ points of $T$ go to the same point. Extend $f$ to a function from $T\cup\{x\}$ by sending $x$ to
\[
(\max \{p_1f(t)\mid t\in T\}+1, 0, \ldots, 0).
\]
In words, $x$ is sent to the point of $\R^n$ whose first  coordinate is one more than the maximal first coordinate of the existing points, and all other coordinates are zero. It is clear that the image of $x$ is different from all the other points in the configuration. Thus if $f$ was an $r$-immersion, then the resulting map $T\cup\{x\}\to \R^n$ is still an $r$-immersion. We have defined a map $s_{T, T\cup\{x\}}\colon\rConf(T, \R^n) \to \rConf(T\cup\{x\}, \R^n)$.
It is clear that the following composition is the identity (not even just homotopic to the identity but is the actual identity map)
\[
\rConf(T, \R^n) \xrightarrow{s_{T, T\cup\{x\}}} \rConf(T\cup\{x\}, \R^n)\xrightarrow{r_{T\cup\{x\}, T}}\rConf(T, \R^n).
\]
It follows that $s_{T, T\cup\{x\}}$ is a section of $r_{T\cup\{x\}, T}$. Next, we need to show that whenever $x, y\notin T$, the following diagram commutes up to homotopy
$$
\begin{aligned}
\xymatrix{
\rConf(T,\R^n) \ar[r] \ar[d] & \rConf(T\cup \{x\},\R^n) \ar[d] \\
\rConf(T\cup\{y\},\R^n) \ar[r]       &  \rConf(T\cup\{x,y\},\R^n)
}
\end{aligned}
$$
It is for this step that we need to assume $n\ge 2$. Let $f\colon T\to \R^n$ represent an element of $\rConf(T,\R^n)$. The images of $f$ in $\rConf(T\cup\{x,y\},\R^n)$ under the two ways around the diagram are two extensions of $f$ from $T$ to $T\cup\{x, y\}$. One of the extensions sends $x$ to $(\max \{p_1f(t)\mid t\in T\}+1, 0, \ldots, 0)$, and sends $y$ to $(\max \{p_1f(t)\mid t\in T\}+2, 0, \ldots, 0)$. The other extension does the same thing, with $x$ and $y$ switched. It is clear that one can write a homotopy between the two maps, by swapping the images of $x$ and $y$ along a circle in the plane spanned by the first two coordinates of $\R^n$.

Finally we need to check that the following mixed square commutes up to homotopy
$$
\begin{aligned}
\xymatrix{
\rConf(T \cup\{x\},\R^n) \ar[r] \ar[d] & \rConf(T,\R^n) \ar[d] \\
\rConf(T\cup\{x, y\},\R^n) \ar[r]       &  \rConf(T\cup\{y\},\R^n)
}
\end{aligned}.
$$
This, too, is clear. In fact, it is easy to check that there is a well-defined straight line homotopy between the two maps around the square.

We have shown that the section maps that we have defined make the cube of $r$-configuration spaces and restriction maps between them into a retractive cube.
\end{proof}

\begin{section}{Connectivity of the cube of (co)homologies of $r$-configuration spaces}\label{S:ccrcs}
We have seen that the cube of spaces $S\mapsto \rConf(\underline{k} \setminus S, \R^n)$, where $S$ ranges over the subsets of $\{1, \ldots, k\}$ is retractive (Lemma~\ref{lemma: config retractive}). It follows that the cube of spectra obtained by applying the suspension spectrum functor to it, i.e., the cube
\begin{equation}\label{eq: suspension}
S\mapsto \Sigma^\infty\rConf(\underline{k} \setminus S, \R^n),
\end{equation}
is also retractive. 

Our goal is to analyse how cartesian is the cube $S\mapsto \HZ\wedge \Sigma^\infty \rConf(\underline{k} \setminus S, \R^n)$. Smash product commutes with total fibers of cubical diagrams of spectra. Therefore, the answer is the same as for the cubical diagram~\eqref{eq: suspension}. However, we want to use the description of the {\it co}homology of $r$-configuration spaces given by the Goresky-MacPherson formula. The following lemma says that the homology and cohomology groups of the relevant spectrum are isomorphic.
\begin{lemma}
The homology and cohomology groups of the total fiber of~\eqref{eq: suspension} are (non-canonically) isomorphic.
\end{lemma}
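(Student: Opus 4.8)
The plan is to exploit the retractivity of the cube together with the classical fact, going back to Björner and Welker, that no-$r$-equal configuration spaces have finitely generated torsion-free homology. Write $\chi$ for the cube $S\mapsto \Sigma^\infty\rConf(\underline{k}\setminus S,\R^n)$ of \eqref{eq: suspension}, which we have just observed is retractive. Applying \refL{commute} with $E=\HZ$ (so that $E_*$ is reduced integral homology of spectra and $E^*$ is reduced integral cohomology), I obtain that $\widetilde{\Ho}_*(\tfiber\chi;\Z)$ is, in each degree, a direct summand of $\widetilde{\Ho}_*(\rConf(k,\R^n);\Z)$, and similarly $\widetilde{\Ho}^*(\tfiber\chi;\Z)$ is a direct summand of $\widetilde{\Ho}^*(\rConf(k,\R^n);\Z)$. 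So it is enough to know that these groups for $\rConf(k,\R^n)$ itself are finitely generated free abelian in each degree, and then to match up ranks degreewise.

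The freeness input is supplied by the Goresky--MacPherson formula. By \refT{GMFormula}, $\widetilde{\Ho}^i(\rConf(k,\R^n);\Z)$ is a finite direct sum of reduced simplicial homology groups $\widetilde{\Ho}_*(\hat 0,x)$ of order complexes of intervals in the intersection lattice $\Pi_{k,r}$, and the Björner--Welker computation~\cite{Bjo95} shows that each such order complex has finitely generated free reduced homology (in fact it is homotopy equivalent to a wedge of spheres). Hence $\widetilde{\Ho}^*(\rConf(k,\R^n);\Z)$ is finitely generated free abelian in each degree; and since $\rConf(k,\R^n)$, being the complement of a subspace arrangement, has the homotopy type of a finite CW complex with torsion-free homology, the same holds for $\widetilde{\Ho}_*(\rConf(k,\R^n);\Z)$.

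To conclude, note that $\tfiber\chi$ is built from suspension spectra of finite-type spaces by finitely many iterated homotopy fibers, so its integral homology is finitely generated in each degree; being in addition a direct summand of a free abelian group it is therefore finitely generated and free abelian degreewise. Then the universal coefficient exact sequence for the spectrum $\tfiber\chi$ has vanishing $\mathrm{Ext}$-terms and collapses to $\widetilde{\Ho}^i(\tfiber\chi;\Z)\cong\Hom(\widetilde{\Ho}_i(\tfiber\chi;\Z),\Z)$, which is free abelian of the same (finite) rank as $\widetilde{\Ho}_i(\tfiber\chi;\Z)$; choosing bases gives the desired (non-canonical) isomorphism $\widetilde{\Ho}_i(\tfiber\chi;\Z)\cong\widetilde{\Ho}^i(\tfiber\chi;\Z)$. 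The only point requiring genuine care — and the only place the argument could break if handled carelessly — is ensuring that everything in sight is degreewise finitely generated and torsion-free before invoking universal coefficients; both facts are imported from the Björner--Welker description of the homology of the $r$-equal arrangements, so that is the real content behind the lemma.
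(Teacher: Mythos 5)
Your argument is correct and is essentially the paper's own proof: retractivity plus Lemma~\ref{L:commute} realizes the homology of the total fiber as a direct summand of the homology of $\Sigma^\infty\rConf(k,\R^n)$, which is finitely generated free abelian by Bj\"orner--Welker, so the universal coefficient theorem gives the (non-canonical) isomorphism with cohomology. The intermediate remarks about finite generation via iterated fibers and the Goresky--MacPherson description are harmless but superfluous, since a direct summand of a finitely generated free abelian group is automatically finitely generated and free.
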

\begin{proof}
It is known, for example by the results of~\cite{Bjo95}, that the homology groups of the space $\rConf(k, \R^n)$, and therefore also of the suspension spectrum of this space, are finitely generated free abelian groups. Since the cube $\Sigma^\infty\rConf(\underline{k} \setminus S, \R^n)$ is retractive, it follows by Lemma~\ref{L:commute} that the homology of the total fiber of the cube $\Sigma^\infty \rConf(\underline{k} \setminus S, \R^n)$ is a direct summand of the homology of $\Sigma^\infty \rConf(k, \R^n)$. Therefore, the homology groups of the total fiber are also finitely generated free abelian groups. Therefore they are isomorphic to the cohomology groups of the total fiber, by the universal coefficients theorem.
\end{proof}
It follows that the homological connectivity of the total fiber of~\eqref{eq: suspension} is equivalent to the cohomological connectivity. Next, we give a qualitative description of the cohomology of the total fiber, in the style of Theorem~\ref{T:GMFormula}.

Let $\Pi_{\geq r}(\underline{k})$ denote the set partitions of $\underline{k}$ with the property that each component has at least $r$ elements (i.e., elements of $\Pi_{k, r}$ without singletons). 
\begin{lemma}\label{L:totalcokernel}
The $i$-th cohomology group of the total fiber of the cube~\eqref{eq: suspension} is isomorphic to the following direct sum:
\begin{equation}\label{E:tcokernel}
\bigoplus_{x\in \Pi_{\geq r}(\underline{k})} \widetilde{\Ho}_{\codim{(x)}-2-i}\Delta(\hat{0},x)
\end{equation}
\end{lemma}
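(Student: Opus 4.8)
The plan is to turn the statement into a single, explicit cokernel computation using Lemma~\ref{L:commute}, and then to read that cokernel off from the Goresky--MacPherson formula and its naturality. Write $\chi$ for the cube~\eqref{eq: suspension}. It is retractive: this is Lemma~\ref{lemma: config retractive} together with the observation, made at the start of this section, that $\Sigma^\infty$ preserves retractivity. So Lemma~\ref{L:commute} applies with $E^*=\Ho^*(-;\Z)$ and gives a natural isomorphism $\Ho^i(\tfiber\chi)\cong\tcokernel(\Ho^i\chi)$. Unwinding $\tcokernel$ dually to the definition of $\tkernel$ in Section~\ref{SS:cubicaldiagrams} (and re-indexing subsets by their complements, so that $\Ho^i\chi$ becomes the cube $S\mapsto\widetilde{\Ho}^i(\rConf(S,\R^n))$ with structure maps induced by the forgetful restrictions), this total cokernel is the cokernel of the single map
\[
\partial\colon\ \bigoplus_{j=1}^{k}\widetilde{\Ho}^i\!\bigl(\rConf(\underline{k}\setminus\{j\},\R^n)\bigr)\ \longrightarrow\ \widetilde{\Ho}^i\!\bigl(\rConf(\underline{k},\R^n)\bigr),
\]
whose $j$-th component is induced by the restriction $\rConf(\underline{k},\R^n)\to\rConf(\underline{k}\setminus\{j\},\R^n)$ forgetting the $j$-th point.

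Next I would feed in the Goresky--MacPherson formula (Theorem~\ref{T:GMFormula}): the target of $\partial$ is the direct sum $\bigoplus_{x} V_x$ with $V_x:=\widetilde{\Ho}_{\codim(x)-2-i}(\hat{0},x)$, indexed by all $r$-equal partitions $x\neq\hat{0}$ of $\underline{k}$. By Lemma~\ref{lemma: naturality}, the $j$-th component of $\partial$ is the inclusion of the subsum indexed by the image of $L^{>\hat{0}}_{\calA_{\underline{k}\setminus\{j\}}}\hookrightarrow L^{>\hat{0}}_{\calA_{\underline{k}}}$, i.e.\ the span of those $V_x$ for which $\{j\}$ is a block of $x$; since every block of an $r$-equal partition is either a singleton or has at least $r$ elements, that condition says $j\notin B(x)$, where $B(x)$ is the union of the non-singleton blocks of $x$. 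Hence the image of $\partial$ is the direct summand $\bigoplus_{x:\,B(x)\neq\underline{k}} V_x$, and therefore
\[
\Ho^i(\tfiber\chi)\ \cong\ \coker\partial\ \cong\ \bigoplus_{x:\,B(x)=\underline{k}} V_x\ =\ \bigoplus_{x\in\Pi_{\geq r}(\underline{k})}\widetilde{\Ho}_{\codim(x)-2-i}(\hat{0},x),
\]
where I used that $B(x)=\underline{k}$ is exactly the condition that every block of $x$ has size $\geq r$, i.e.\ $x\in\Pi_{\geq r}(\underline{k})$. This is the asserted formula~\eqref{E:tcokernel}.

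The only step requiring any care is the bookkeeping in the first paragraph — identifying $\tcokernel(\Ho^i\chi)$ with the concrete map $\partial$ via the complement re-indexing and the single-cokernel description of $\tcokernel$ — together with the invocation of Lemma~\ref{lemma: naturality} to pin down $\operatorname{image}(\partial)$ as a genuine sub-direct-sum of the Goresky--MacPherson summands. Once those are in place the conclusion is immediate, since everything in sight is a direct sum of (finitely generated free) abelian groups split compatibly with all the maps. I do not expect a real obstacle here; the substantive work has already been done in Lemma~\ref{L:commute}, Theorem~\ref{T:GMFormula} and Lemma~\ref{lemma: naturality}.
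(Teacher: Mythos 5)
Your proposal is correct and follows essentially the same route as the paper: retractivity of the suspension-spectrum cube, the cohomological part of Lemma~\ref{L:commute} to identify $\Ho^i(\tfiber\chi)$ with the cokernel of the single map $\bigoplus_j \widetilde{\Ho}^i(\rConf(\underline{k}\setminus\{j\},\R^n))\to\widetilde{\Ho}^i(\rConf(\underline{k},\R^n))$, and then Theorem~\ref{T:GMFormula} with Lemma~\ref{lemma: naturality} to see that the image is the span of the summands indexed by $r$-equal partitions having at least one singleton, so the cokernel is the sum over $\Pi_{\geq r}(\underline{k})$. Your bookkeeping with $B(x)$ is just a more explicit phrasing of the paper's argument, so no further comment is needed.
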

\begin{proof}
The cube~\eqref{eq: suspension} is retractive. Using the cohomological part of Lemma~\ref{L:commute}, we conclude that the $i$-th cohomology of the total fiber is isomorphic to the cokernel of the homomorphism
\[
\bigoplus_{i=1}^k \widetilde{\Ho}^i\rConf(k \setminus \{i\}, \R^n) \to \widetilde{\Ho}^i\rConf(k, \R^n).
\]
By Lemma~\ref{lemma: naturality}, this homomorphism can be identified with the following homomorphism
\begin{equation}
\bigoplus_{i=1}^k\bigoplus_{x\in{L^{>\hat{0}}_{{\calA}_{\underline{k} \setminus\{i\}}}}} \widetilde{\Ho}_{\codim(x)-2-i}\Delta(\hat{0},x)\to \bigoplus_{x\in{L^{>\hat{0}}_{{\calA}_k}}} \widetilde{\Ho}_{\codim(x)-2-i}\Delta(\hat{0},x)
\end{equation}
The homomorphism maps each summand in the source isomorphically onto a summand in the target (some summands in the source go to the same summand in the target, so the homomorphism is not injective). The image of the homomorphism is the sum of terms corresponding to $r$-equal partitions with at least one singleton. The cokernel is the direct sum of terms corresponding to $r$-equal partitions that do not have a singleton.
\end{proof}


It follows from Lemma~\ref{L:totalcokernel} that to find how cartesian the cube \eqref{eq: suspension} is, we need to find the smallest $i$ for which the homology group
\begin{equation}\label{E:homologyofcomplex}
\widetilde{\Ho}_{\codim(x)-2-i}\Delta(\hat{0},x)
\end{equation}
is non-trivial for some $x\in \Pi_{\ge r}(\underline{k})$.  

Throughout this section, let $x$ be a partition of $\{1, \ldots, k\}$ where each block has at least $r$ elements. Recall that $c(x)$ denotes the number of blocks of $x$. Note that if $k_1, \ldots, k_{c(x)}$ are the sizes of the blocks of $x$, then $k_1+\cdots+k_{c(x)}=k$. Let $[\hat 0, x]$ be the closed interval in $\Pi_{k, r}$.
\begin{lemma}
Let $x$ be as above. Suppose $x$ has $c(x)$ blocks, of sizes $k_1, \ldots, k_{c(x)}$. Then there is an isomorphism of posets
\[
[\hat 0, x]\cong \Pi_{k_1, r}\times \cdots\times \Pi_{k_{c(x)}, r}.
\]
\end{lemma}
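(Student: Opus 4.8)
The plan is to build an explicit order-isomorphism, rather than argue abstractly. Write $B_1,\ldots,B_{c(x)}$ for the blocks of $x$, so that $|B_j|=k_j$ and $B_1\sqcup\cdots\sqcup B_{c(x)}=\{1,\ldots,k\}$, and fix once and for all a bijection $B_j\cong\{1,\ldots,k_j\}$; this identifies the poset of $r$-equal partitions of the set $B_j$ with $\Pi_{k_j,r}$, so it suffices to produce an isomorphism from $[\hat{0},x]$ to the product of these posets of partitions of the $B_j$. First I would describe the interval concretely: $[\hat{0},x]$ consists of exactly those $\lambda\in\Pi_{k,r}$ that refine $x$, i.e.\ those $\lambda$ each of whose blocks is contained in a single block of $x$.

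Given such a $\lambda$, each $B_j$ is a union of blocks of $\lambda$, so $\lambda$ restricts to a partition $\lambda|_{B_j}$ of $B_j$; its blocks are precisely the blocks of $\lambda$ that lie in $B_j$, hence each is a singleton or has at least $r$ elements, so $\lambda|_{B_j}$ is an $r$-equal partition of $B_j$. This defines a map $\Phi\colon[\hat{0},x]\to\Pi_{k_1,r}\times\cdots\times\Pi_{k_{c(x)},r}$ by $\lambda\mapsto(\lambda|_{B_1},\ldots,\lambda|_{B_{c(x)}})$. Conversely, given $r$-equal partitions $\mu_j$ of $B_j$ for each $j$, I would let $\Psi(\mu_1,\ldots,\mu_{c(x)})$ be the partition of $\{1,\ldots,k\}$ whose blocks are all the blocks of all the $\mu_j$; this refines $x$ by construction and each of its blocks is a block of some $\mu_j$, hence a singleton or of size $\ge r$, so it lies in $[\hat{0},x]$. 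It is immediate from the definitions that $\Phi$ and $\Psi$ are mutually inverse set maps.

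What remains is to check that both maps preserve the partial order, and this is the one point worth stating carefully (though it is not really an obstacle): if $\lambda,\lambda'$ both refine $x$, then $\lambda\le\lambda'$ if and only if $\lambda|_{B_j}\le\lambda'|_{B_j}$ for every $j$. Indeed $\lambda\le\lambda'$ means every block of $\lambda'$ is a union of blocks of $\lambda$; since each block of $\lambda'$ already sits inside a single $B_j$, this condition is \emph{local to the blocks of $x$}, which gives the equivalence, and the ordering on the product poset is componentwise. Hence $\Phi$ is a poset isomorphism with inverse $\Psi$, and composing with the chosen identifications of the partitions of $B_j$ with $\Pi_{k_j,r}$ yields the claimed isomorphism $[\hat{0},x]\cong\Pi_{k_1,r}\times\cdots\times\Pi_{k_{c(x)},r}$. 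The only things to get right are the well-definedness checks (that restriction and amalgamation stay inside $\Pi_{\bullet,r}$) and this block-by-block characterization of refinement among partitions that already refine $x$; there is no subtler difficulty.
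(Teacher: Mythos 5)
Your proposal is correct and follows the same approach as the paper: identifying elements of $[\hat 0, x]$ (the $r$-equal refinements of $x$) with tuples of $r$-equal partitions of the individual blocks of $x$. You simply spell out the explicit inverse bijections and the order-preservation check that the paper leaves implicit.
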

\begin{proof}
The interval $[\hat 0, x]$ consists of $r$-equal partitions of $\{1, \ldots, k\}$ that are refinements of $x$. This is the same data as an $r$-equal partition of each block of $x$, which is the same as an element of $\Pi_{k_1, r}\times \cdots\times \Pi_{k_{c(x)}, r}$.
\end{proof}
Given a poset $\calP$ with a minimum and maximum element, let $\calP^0$ be the poset $\calP$ with the minimum and maximum removed.
\begin{cor}
Let $x$ be as in the previous lemma. Then there is a homotopy equivalence ($*$ denotes join)
\[
|\Delta(\hat 0, x)|\simeq \Sigma^{c(x)-1} |\Pi_{k_1, r}^0| *\cdots* |\Pi_{k_{c(x)}, r}^0|.
\]
\end{cor}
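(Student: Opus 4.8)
The plan is to obtain the corollary by feeding the previous lemma into the classical description of the order complex of the proper part of a product of bounded posets. First I would record that, since $x\in\Pi_{\geq r}(\underline{k})$, every block of $x$ has size $k_i\ge r$, so each factor $\Pi_{k_i,r}$ has both a minimum (the partition into singletons) and a maximum (the one-block partition); in particular each factor is a bounded poset, so $\Pi_{k_i,r}^0$ is defined. The poset isomorphism $[\hat 0,x]\cong\Pi_{k_1,r}\times\cdots\times\Pi_{k_{c(x)},r}$ of the previous lemma carries $\hat 0$ to $(\hat 0,\dots,\hat 0)$ and $x$ to $(\hat 1,\dots,\hat 1)$, hence restricts to an isomorphism of open intervals $(\hat 0,x)\cong\bigl(\Pi_{k_1,r}\times\cdots\times\Pi_{k_{c(x)},r}\bigr)^0$. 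So $|\Delta(\hat 0,x)|$ is the realization of the order complex of the proper part of this product, and everything reduces to identifying that space.

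For this I would invoke the standard fact that for bounded posets $P_1,\dots,P_c$ there is a homotopy equivalence
\[
\bigl|\Delta\bigl((P_1\times\cdots\times P_c)^0\bigr)\bigr|\ \simeq\ \Sigma^{\,c-1}\bigl(|\Delta(P_1^0)|*\cdots*|\Delta(P_c^0)|\bigr).
\]
This is classical (it goes back to Walker, and is used for exactly this kind of computation in \cite{Bjo95}). I would prove it by induction on $c$: writing $P=P_1\times\cdots\times P_{c-1}$ and $Q=P_c$, the inductive step combines the two-factor case with the induction hypothesis $|\Delta(P^0)|\simeq\Sigma^{\,c-2}(|\Delta(P_1^0)|*\cdots*|\Delta(P_{c-1}^0)|)$ and the identity $\Sigma^{a}X*Y\simeq\Sigma^{a}(X*Y)$ (which follows from $\Sigma^{a}X\simeq S^{a-1}*X$ and associativity of the join). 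The two-factor base case $c=2$ is the heart of the matter: using the natural homeomorphism $|\Delta(P\times Q)|\cong|\Delta(P)|\times|\Delta(Q)|$ one sees that $\Delta((P\times Q)^0)$ is the full subcomplex obtained by deleting the two opposite corner vertices $(\hat 0_P,\hat 0_Q)$ and $(\hat 1_P,\hat 1_Q)$; decomposing the proper part $(P\times Q)^0$ into an order ideal and an order filter and running a Mayer--Vietoris / homotopy-pushout argument then produces the join $|\Delta(P^0)|*|\Delta(Q^0)|$ together with one extra suspension coming from the gluing.

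Applying this with $P_i=\Pi_{k_i,r}$, and noting that the realization of $\Delta(\Pi_{k_i,r}^0)$ is by definition $|\Pi_{k_i,r}^0|$, gives
\[
|\Delta(\hat 0,x)|\ \simeq\ \Sigma^{\,c(x)-1}\bigl(|\Pi_{k_1,r}^0|*\cdots*|\Pi_{k_{c(x)},r}^0|\bigr),
\]
which is the assertion. The main (and only genuine) obstacle is the two-factor base case above; it is classical, but care is needed with the degenerate factors, namely blocks of size exactly $r$, for which $\Pi_{r,r}$ is the two-element chain so $\Pi_{r,r}^0=\varnothing$ — here the conventions $X*\varnothing=X$ and $\varnothing\simeq S^{-1}$ (hence $\Sigma\varnothing\simeq S^{0}$) keep the suspension and join count consistent, as one checks directly on the smallest cases. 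If one prefers to sidestep the homotopy-theoretic statement, only its homological shadow $\widetilde{\Ho}_n((P\times Q)^0)\cong\bigoplus_{i+j=n-2}\widetilde{\Ho}_i(P^0)\otimes\widetilde{\Ho}_j(Q^0)$ (up to $\operatorname{Tor}$) is used in the sequel, and for the posets $\Pi_{k,r}^0$ the relevant homology is free abelian by \cite{Bjo95}, so the $\operatorname{Tor}$ terms vanish; this homological version comes out of the same Mayer--Vietoris argument.
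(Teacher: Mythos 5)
Your argument is correct and follows essentially the same route as the paper: combine the poset isomorphism $[\hat 0,x]\cong \Pi_{k_1,r}\times\cdots\times\Pi_{k_{c(x)},r}$ from the lemma with the two-factor product formula $|(\calP\times\calQ)^0|\simeq \Sigma\,|\calP^0|*|\calQ^0|$ (Walker, Theorem 5.1(d)), iterated over the factors; the paper simply cites Walker where you sketch a proof. Your care with the degenerate blocks of size exactly $r$ (where $\Pi_{r,r}^0=\varnothing$) is a sensible extra check but not a divergence in method.
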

\begin{proof}
This follows from the lemma, and the well-known fact that given two posets $\calP$ and $\calQ$ with minimum and maximum objects, there is a homotopy equivalence~\cite[Theorem 5.1 (d)]{Walker:Posets} 
$$|(\calP \times \calQ)^0| \simeq \Sigma |\calP^0|*|\calQ^0|.$$
\end{proof}

\begin{lemma}\label{L:dimensionofcomplex}
Let $x$ be as in the previous lemma and corollary. Then $|\Delta(\hat{0},x)|$ is homotopy equivalent to a complex of dimension $k-c(x)(r-1)-2.$
Furthermore, the homology of $|\Delta(\hat{0},x)|$ in dimension $k-c(x)(r-1)-2$ is non zero.
\end{lemma}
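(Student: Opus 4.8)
The plan is to play off the Corollary just stated --- which presents $|\Delta(\hat 0, x)|$, up to homotopy, as an iterated join $|\Pi_{k_1,r}^0|*\cdots*|\Pi_{k_{c(x)},r}^0|$ suspended $c(x)-1$ times --- against the Bj\"orner--Welker computation of the homotopy type of the order complexes $|\Pi_{m,r}^0|$. The input I would quote from \cite{Bjo95} is: for every $m\ge r$, the complex $|\Pi_{m,r}^0|=|\Delta(\hat 0,\hat 1)|$ of the proper part of $\Pi_{m,r}$ is homotopy equivalent to a wedge of spheres whose top-dimensional sphere has dimension $m-r-1$; in particular $\widetilde{\Ho}_j(|\Pi_{m,r}^0|;\Z)$ is a finitely generated \emph{free} abelian group, vanishing for $j>m-r-1$ and nonzero for $j=m-r-1$. (That the dimension is $m-r-1$ is also elementary: in the open interval $(\hat 0,\hat 1)$ of $\Pi_{m,r}$ the element just above $\hat 0$ has a block of size $\ge r$, hence $\le m-r+1$ blocks, and block counts strictly decrease along a chain down to $\ge 2$, so a chain has at most $m-r$ elements, with equality realized by the chain of partitions whose unique non-trivial block runs through a nested family of sizes $r,r+1,\dots,m-1$. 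The degenerate case $m=r$ gives $|\Pi_{r,r}^0|=\emptyset$, consistent with $S^{-1}=\emptyset$, $\widetilde{\Ho}_{-1}(\emptyset)=\Z$, and $A*\emptyset=A$.)

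Writing $d_i:=k_i-r-1$ for the top dimension of $|\Pi_{k_i,r}^0|$, and using that a join of complexes of dimensions $d_1,\dots,d_{c(x)}$ has dimension $\sum_i d_i+(c(x)-1)$ while $\Sigma^{c(x)-1}$ raises dimension by $c(x)-1$, the Corollary shows that $|\Delta(\hat 0,x)|$ is homotopy equivalent to a wedge of spheres whose top dimension is
\[
\textstyle\sum_{i=1}^{c(x)}(k_i-r-1)+2(c(x)-1)=k-c(x)(r-1)-2 ,
\]
where I used $\sum_i k_i=k$. This is the claimed dimension of the complex. For the ``furthermore'' I would invoke the K\"unneth theorem for joins: since each $\widetilde{\Ho}_*(|\Pi_{k_i,r}^0|)$ is free, there is no Tor contribution in the top degree, so iterating the K\"unneth formula identifies $\widetilde{\Ho}_{\sum_i d_i+c(x)-1}$ of the join with $\bigotimes_{i=1}^{c(x)}\widetilde{\Ho}_{d_i}(|\Pi_{k_i,r}^0|)$, a tensor product of nonzero finitely generated free abelian groups, hence nonzero. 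Applying $\Sigma^{c(x)-1}$ shifts this up by $c(x)-1$, i.e. into degree $k-c(x)(r-1)-2$, and the Corollary identifies the result with $\widetilde{\Ho}_{k-c(x)(r-1)-2}(|\Delta(\hat 0,x)|)$, which is therefore nonzero.

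The only genuinely external ingredient is the Bj\"orner--Welker statement that $|\Pi_{m,r}^0|$ is a wedge of spheres with top sphere of dimension $m-r-1$ (equivalently, that its reduced homology is free and nonzero in that top degree); once this is in hand, everything reduces to the arithmetic of join and suspension dimensions together with the reduced K\"unneth formula, so I do not anticipate any real obstacle.
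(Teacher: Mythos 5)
Your argument is correct and follows essentially the same route as the paper: identify $|\Delta(\hat 0,x)|$ with $\Sigma^{c(x)-1}|\Pi_{k_1,r}^0|*\cdots*|\Pi_{k_{c(x)},r}^0|$ via the Corollary, quote Bj\"orner--Welker for the wedge-of-spheres description with top dimension $k_i-r-1$, and do the join/suspension arithmetic. The only difference is cosmetic: where the paper simply observes that the suspended join of wedges of spheres is again a wedge of spheres with top homology in degree $k-c(x)(r-1)-2$, you extract the nonvanishing via the K\"unneth formula for joins (and note the degenerate case $k_i=r$), which amounts to the same computation.
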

\begin{proof}
By the corollary, the space $|\Delta(\hat{0},x)|$ is homotopy equivalent to $\Sigma^{c(x)-1} |\Pi_{k_1, r}^0| *\cdots* |\Pi_{k_{c(x)}, r}^0|$. By the results of~\cite{Bjo95}, $|\Pi_{k, r}^0|$ is homotopy equivalent to a wedge of spheres, not all of the same dimension, and the top homology of this space occurs in dimension $k-r-1$. It follows that the space $\Sigma^{c(x)-1} |\Pi_{k_1, r}^0| *\cdots* |\Pi_{k_{c(x)}, r}^0|$ is a wedge of spheres, with the top homology occurring in dimension $$c(x)-1 + (k_1-r-1)+\cdots+(k_{c(x)}-r-1) + c(x)-1= k - c(x)(r-1)-2.$$
\end{proof}

\begin{example}
If $r \leq k <2r$, there is only one summand $x$ in \eqref{E:tcokernel} - this is the partition $\{\underline{k}\}$, or in other words the thin diagonal. For this $x$, $\dim \Delta (\hat{0},x)=k-r-1$. 
\end{example}
Now we can state and prove the main result of this section
\begin{prop}\label{prop: conf connectivity}
When $r\le n+1$, the cube~\eqref{eq: suspension} is $k(n-1)+\left\lfloor \frac{k}{r}\right\rfloor (r-n-1)$-cartesian.

When $r\ge n+1$, the cube~\eqref{eq: suspension} is $k(n-1)+r-n-1$-cartesian.
\end{prop}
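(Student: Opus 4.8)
The plan is to pin down the $c$-cartesianness of the cube~\eqref{eq: suspension} by locating the lowest degree in which the cohomology of its total fiber is nonzero. By Lemma~\ref{L:totalcokernel} that cohomology in degree $i$ is the group~\eqref{E:tcokernel},
\[
\bigoplus_{x\in\Pi_{\geq r}(\underline{k})}\widetilde{\Ho}_{\codim(x)-2-i}(\hat 0,x),
\]
so I need the smallest $i$ for which $\widetilde{\Ho}_{\codim(x)-2-i}(\hat 0,x)\neq 0$ for some $r$-equal partition $x$ of $\underline{k}$ with no singleton blocks, and then show that this number equals the asserted bound.

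First I would treat a single such $x$, with $c(x)$ blocks whose sizes sum to $k$. Equation~\eqref{eq: codimension} gives $\codim(x)=n\bigl(k-c(x)\bigr)$, and Lemma~\ref{L:dimensionofcomplex} says that $\widetilde{\Ho}_j(\hat 0,x)$ vanishes for $j>k-c(x)(r-1)-2$ and is nonzero for $j=k-c(x)(r-1)-2$. Solving $\codim(x)-2-i=k-c(x)(r-1)-2$ then shows that the summand indexed by $x$ first becomes nonzero in degree
\[
i_x:=k(n-1)+c(x)\,(r-n-1).
\]

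Next I would minimize $i_x$ over all $x\in\Pi_{\geq r}(\underline{k})$; this is an affine function of $c(x)$ with slope $r-n-1$. When $r\le n+1$ the slope is nonpositive, so one maximizes $c(x)$: a partition of $\underline{k}$ into blocks of size at least $r$ has at most $\lfloor k/r\rfloor$ blocks, and this is attained (take $\lfloor k/r\rfloor-1$ blocks of size $r$ together with one block of size $r+(k\bmod r)$), giving the bound $k(n-1)+\lfloor k/r\rfloor(r-n-1)$. When $r\ge n+1$ the slope is nonnegative, so one minimizes $c(x)=1$, i.e. the thin-diagonal partition, which is admissible precisely when $k\ge r$; this gives $k(n-1)+r-n-1$, and when $k<r$ the indexing set $\Pi_{\geq r}(\underline{k})$ is empty, the total fiber has trivial cohomology, and the cube is $\infty$-cartesian, so the claimed bound holds trivially.

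It remains to turn ``lowest nonzero cohomology degree equals $c$'' into ``$c$-cartesian''. By Lemma~\ref{L:commute} the (co)homology of the total fiber is a direct summand of that of $\Sigma^\infty\rConf(k,\R^n)$, hence finitely generated and free (by the results of Bj\"orner--Welker), so the homology and cohomology of the total fiber vanish in the same degrees; moreover each $x$-summand above is supported in degrees $\ge i_x$, and $i_x\ge 0$ for every $x\in\Pi_{\geq r}(\underline{k})$ (an easy consequence of $n\ge 2$ and $r\ge 2$), so the total fiber is a connective spectrum whose bottom nonzero homology group lies in degree $c$. The Hurewicz theorem then makes it $(c-1)$-connected, and by the spectrum-level converse recalled in Section~\ref{SS:cubicaldiagrams} the cube~\eqref{eq: suspension} is $c$-cartesian. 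The computation is essentially bookkeeping once Lemmas~\ref{L:totalcokernel} and~\ref{L:dimensionofcomplex} are in hand; the points I would be most careful about are the algebra that collapses $i_x$ into a clean affine function of $c(x)$, the verification that a partition into $\lfloor k/r\rfloor$ blocks of size $\ge r$ genuinely exists, and the final passage from homology to cartesianness, which uses both the freeness of the homology groups and the equivalence (for spectra) between $c$-cartesian and $(c-1)$-connected total fiber.
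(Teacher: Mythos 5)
Your argument is correct and follows the paper's own proof essentially verbatim: both use Lemma~\ref{L:totalcokernel} together with Lemma~\ref{L:dimensionofcomplex} and $\codim(x)=n(k-c(x))$ to get the threshold degree $k(n-1)+c(x)(r-n-1)$ for each $x$, and then optimize $c(x)$ in the two cases ($c(x)=\lfloor k/r\rfloor$ when $r\le n+1$, $c(x)=1$ when $r\ge n+1$). The extra points you spell out (existence of a partition realizing $\lfloor k/r\rfloor$ blocks, the degenerate case $k<r$, and the passage from cohomology vanishing to cartesianness via freeness and the stable Hurewicz theorem) are only making explicit steps the paper leaves implicit.
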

\begin{rem}\label{remark: borderline}
Note that when $r=n+1$ both formulas say that the cube~\eqref{eq: suspension} is $k(n-1)$-cartesian.
\end{rem}
\begin{proof}
Given $x$, the smallest $i$ for which the homology \eqref{E:homologyofcomplex} might be non-trivial is one that satisfies $\codim(x)-2-i=\dim \Delta(\hat{0},x)$. Using Lemma \ref{L:dimensionofcomplex} we have that the smallest $i$ for which the total cokernel \eqref{E:tcokernel} might be non-trivial is one that satisfies
$$
\codim(x)-2-i=k-c(x)(r-1)-2.
$$

Because $\codim(x)=n(k-c(x))$ for $x \in \Pi_{\geq r}(\underline{k})$, it follows that

\begin{equation}\label{E:LowestHomology}
i=k(n-1)+c(x)(r-n-1).
\end{equation}

We have to see for which $x$ this number $i$ is the smallest possible. We distinguish between two overlapping cases, depending on the sign of $r-n-1$.\\

1) When $r-n-1\le 0$, i.e. when $r\le n+1$, finding $i$ as small as possible is the same as finding $x \in \Pi_{\geq r}(\underline{k})$ with the biggest number $c(x)$ of components. Since all components have to be of the size at least $r$, the largest number of them is attained when there is a maximum number of them of the size $r$. In that case, $c(x)=\left\lfloor \frac{k}{r}\right\rfloor$, so the smallest $i$ is $$i=k(n-1)+\left\lfloor \frac{k}{r}\right\rfloor (r-n-1).$$ So in this case, the cubical diagram \eqref{eq: suspension} is $k(n-1)+\left\lfloor \frac{k}{r}\right\rfloor (r-n-1)$-cartesian.


2) When  $r-n-1\ge 0$, i.e. $r\ge n+1$, finding $i$ as small as possible is the same as finding $x \in \Pi_{\geq r}(\underline{k})$ with the smallest number $c(x)$ of components. Thus we need $c(x)$ to be equal to $1$. This $x$ is actually the thin diagonal in the space $(\R^n)^k$ that corresponds to the partition $\{\underline{k}\}$ of $\underline{k}$. In that case,
$$i=k(n-1)+r-n-1,$$
hence~\eqref{eq: suspension} is $k(n-1)+r-n-1$-cartesian.
\end{proof}


\end{section}
\vspace*{0.5cm}
\begin{section}{Convergence result}\label{convergenceresult}

Let $M$ be a smooth manifold of dimension $m$. Now we finally can calculate the connectivity of the map
\begin{equation}\label{E:mapTk}
T_k \HZ\wedge \rbar(M,\R^n)\to T_{k-1}\HZ\wedge \rbar(M,\R^n).
\end{equation}

Knowing that $c_k$-connectivity of the total fiber of the cube \eqref{eq: suspension} implies $(c_k-km+1)$-connectivity of the map \eqref{E:mapTk}, we can find the conditions under which the Taylor tower converges, using results from Section \ref{S:ccrcs}. There are three different cases.\\

1) For $r-n-1<0$, the connectivity of the map \eqref{E:mapTk} is
\begin{equation}\label{E:Conn:r-n-1<0}
\begin{aligned}
k(n-1)+\left\lfloor \frac{k}{r}\right\rfloor(r-n-1)-1-mk+1 &= k(n-m-1)+\left\lfloor \frac{k}{r}\right\rfloor(r-n-1)\\
&=  k(n-m-1)+ \left(\frac{k}{r}-\frac{k\hspace{-8pt}\mod r}{r}\right) (r-n-1)\\
&= k\left(n-m-\frac{n}{r}-\frac{1}{r}\right)-\frac{k\hspace{-8pt}\mod r}{r} (r-n-1)\\
&= k\left(n\frac{r-1}{r}-m-\frac{1}{r}\right)-\frac{k\hspace{-8pt}\mod r}{r} (r-n-1)
\end{aligned}
\end{equation}
where we noted that $\left\lfloor \frac{k}{r}\right\rfloor = k/r - (k\hspace{-4pt}\mod r)/r$.
Note now that
$$
-\frac{k\hspace{-8pt}\mod r}{r} (r-n-1)
$$
is nonnegative since $r-n-1<0$.
This means that, as long as
$$
n\frac{r-1}{r}-m-\frac{1}{r} >0,
$$
the connectivities increase with $k$.\\

2)  For $r-n-1=0$, the connectivity of the map \eqref{E:mapTk} is
\begin{equation}\label{E:Conn:r-n-1=0}
k(n-1)-1-mk+1=k(n-m-1),
\end{equation}
which goes to $+\infty$ as $k\longrightarrow +\infty$ if $n-m-1>0$.\\

3) For $r-n-1>0$, the connectivity of the map \eqref{E:mapTk} is
\begin{equation}\label{E:Conn:r-n-1>0}
k(n-1)+r-n-2-mk+1=k(n-m-1)+r-n-1,
\end{equation}
which goes to $+\infty$ as $k\longrightarrow +\infty$ if $n-m-1>0$.\\

Thus we proved the following theorem.
\begin{thm} (Homological convergence of the Taylor tower for $r$-immersions in $\mathbb R^n$)\label{T:Convergence}\\
Let $M$ be an $m$-dimensional smooth manifold and $\R^n$ the $n$-dimensional Euclidean space. Assume $n>1$. Let $\rImm(M,\R^n)$ be the space of $r$-immersions of $M$ in $\R^n$. Consider the map
\begin{equation*}
p_k\colon T_k\HZ\wedge \rbar(M, \R^n)\to T_{k-1}\HZ\wedge \rbar(M, \R^n).
\end{equation*}
\\
a) For $r\le n+1$ the map $p_k$ is $$k\left(n\frac{r-1}{r}-m-\frac{1}{r}\right)-\frac{k\hspace{-8pt}\mod r}{r} (r-n-1)$$-connected. The tower converges intrinsically if $n>\frac{rm+1}{r-1}.$

b)  For $r \ge n+1$ the map $p_k$ is $k(n-m-1)+r-n-1$-connected. The tower converges intrinsically if $n> m+1$.
\end{thm}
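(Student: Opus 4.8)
The strategy is to reduce everything to the cartesianness of the cube of $r$-configuration spaces computed in Proposition~\ref{prop: conf connectivity}, and then to carry out the bookkeeping in the three cases governed by the sign of $r-n-1$. First I would invoke Proposition~\ref{P:ConnOfLayer} in the form valid for functors valued in spectra (legitimate since $\Omega^\infty$ and $\HZ\wedge-$ commute with Taylor approximations, as recalled at the end of Section~\ref{S:Prereq}): the connectivity of $p_k$ equals $c_k-mk$, where $c_k$ is the degree to which the $k$-cube
\[
S\mapsto \HZ\wedge\rbar\!\left(\coprod_{\underline{k}\setminus S}D^m,\R^n\right)
\]
is cartesian. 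By the main result of~\cite{AS:rimmrconf} this cube is equivalent to $S\mapsto \HZ\wedge\rConf(\underline{k}\setminus S,\R^n)$. Since smash product with the fixed spectrum $\HZ$ commutes with finite homotopy limits of spectra, the total fibre of this cube is $\HZ$ smashed with the total fibre of the suspension-spectrum cube~\eqref{eq: suspension}, so its connectivity is no smaller than the connectivity of the latter. Hence Proposition~\ref{prop: conf connectivity} applies and gives
\[
c_k=k(n-1)+\left\lfloor\tfrac{k}{r}\right\rfloor(r-n-1)\ \ (r\le n+1),\qquad c_k=k(n-1)+r-n-1\ \ (r\ge n+1).
\]

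Next I would substitute into $c_k-mk$ and simplify, splitting according to the sign of $r-n-1$ (the case $r=n+1$ being the common boundary, Remark~\ref{remark: borderline}). When $r\le n+1$, writing $\left\lfloor k/r\right\rfloor=k/r-(k\bmod r)/r$ rewrites $c_k-mk$ as
\[
k\!\left(n\tfrac{r-1}{r}-m-\tfrac1r\right)-\tfrac{k\bmod r}{r}(r-n-1),
\]
and the second term is nonnegative (because $r-n-1\le 0$) and bounded (because $k\bmod r<r$). When $r\ge n+1$ the expression collapses to $k(n-m-1)+r-n-1$, with $r-n-1=0$ in the borderline case. This already yields the two connectivity statements for $p_k$ in parts (a) and (b) of the theorem.

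Finally, intrinsic convergence means $c_k-mk\to\infty$. In the case $r\le n+1$ the lower bound $k\bigl(n\frac{r-1}{r}-m-\frac1r\bigr)$ obtained above tends to $+\infty$ as soon as its coefficient is positive, i.e.\ $n(r-1)>rm+1$, equivalently $n>\frac{rm+1}{r-1}$. In the case $r\ge n+1$, $k(n-m-1)+r-n-1\to+\infty$ as soon as $n>m+1$. This is exactly the content of Theorem~\ref{T:Convergence}.

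I do not expect a conceptual obstacle: the substantive work was already done in Proposition~\ref{prop: conf connectivity}. The two points that genuinely need care are (a) passing from the cartesianness of the suspension-spectrum cube~\eqref{eq: suspension} to that of the $\HZ$-module cube — which goes through commuting $\HZ\wedge-$ past the total fibre together with the connectivity estimate for the smash product of a connective spectrum with a bounded-below one, and uses that all the (co)homology groups involved are finitely generated free (compare Lemma~\ref{L:commute}); and (b) the slightly fiddly manipulation of the floor function and of the sign of $r-n-1$ needed to land on the clean inequalities $n>\frac{rm+1}{r-1}$ and $n>m+1$. Neither is deep.
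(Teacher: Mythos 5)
Your proposal is correct and follows essentially the same route as the paper: reduce to the cartesianness of the configuration-space cube via Proposition~\ref{P:ConnOfLayer}, the equivalence $\rbar(\coprod D^m,\R^n)\simeq\rConf(\cdot,\R^n)$ from~\cite{AS:rimmrconf}, and the fact that $\HZ\wedge-$ commutes with total fibers, then apply Proposition~\ref{prop: conf connectivity} and do the floor-function arithmetic in the cases determined by the sign of $r-n-1$. The only cosmetic difference is that the paper splits the computation into three cases ($r-n-1$ negative, zero, positive) whereas you fold the borderline case into $r\ge n+1$, which is harmless.
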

\begin{proof}
Only the assertions regarding intrinsic convergence remain to be checked. The tower converges intrinsically if the connectivity of $p_k$ approaches $\infty$ with $k$. In the case $r\le n+1$, since $(k\!\!\mod r)$ is a bounded function of $k$, this is equivalent to the condition $n\frac{r-1}{r}-m-\frac{1}{r}>0$, which is the same as $n>\frac{rm+1}{r-1}$. In the case $r\ge n+1$, the formula for the connectivity of $p_k$ clearly tells us that the connectivity goes to $\infty$ if $n>m+1$. \end{proof}

\end{section}
\vspace*{0.5cm}
\begin{section}{Comparing with the unstable tower}\label{section:compare}
In this section we will compare the layers, and the connectivities of the maps in the Taylor tower of $\HZ\wedge \rbar(M, \R^n)$ with those in the Taylor tower of the unstabilized functor $\rImm(M, \R^n)$. We will show that roughly up to degree $2r-1$ the connectivities of the maps in the two towers are the same, and the first non-trivial homotopy groups of the layers are isomorphic. 

In this section, let us assume that we chose a basepoint in $\rImm(M, \R^n)$ rather than just in $\Imm(M, \R^n)$, so that the presheaf $\rbar(-, \R^n)$ takes values in pointed spaces. We have a diagram of presheaves
\begin{equation}\label{eq: expanded comparison}
\rImm(-, \R^n)\xleftarrow{i} \rbar(-, \R^n) \xrightarrow{s}  \Omega^\infty \Sigma^\infty \rbar(-, \R^n) 
\xrightarrow{h} \Omega^\infty\HZ\wedge \rbar(-, \R^n).
\end{equation}
The map $i$ induces an equivalence of derivatives and layers beyond the first one. The map $h$ induces the Hurewicz homomorphism on each layer and each stage of the Taylor tower. In particular, it induces the Hurewicz isomorphism on the first non-trivial homotopy group of each layer. In Theorem~\ref{thm: comparison} we address the question for which $k$ the map $s$, and therefore also  $h\circ s$, induces an isomorphism on the first nontrivial homotopy group of the $k$-th layer. When $r=2$, the answer is known to be: only for $k=2$. We show that for $r>2$ the answer is: for all $k\le 2r-1$, with a small caveat for $r=3$.

\begin{thm}\label{thm: comparison}
Assume $0<\dim(M)<n$, $r>2$.

For $1< k < r$, the following maps are equivalences:

\[T_k\rImm(M, \R^n)\to T_1\rImm(M, \R^n)\simeq \Imm(M, \R^n)\]

and

\[T_k \HZ\wedge \rbar(M, \R^n)\to T_1 \HZ\wedge \rbar(M, \R^n) \simeq *.\]

For $r\le k \le 2r-1$, the connectivity of the map $T_k\rImm(M, \R^n)\to T_{k-1}\rImm(M, \R^n)$ is the same as the connectivity of the map $T_k\HZ\wedge \rbar(M, \R^n)\to T_{k-1}\HZ\wedge \rbar(M, \R^n)$. Note that the connectivity of these maps,  and therefore the connectivity of the fibers of these maps, can be read off Theorem~\ref{T:Convergence}.

For $r>3$ and $r\le k\le 2r-1$, and also for $r=3$ and $r\le k < 2r-1$, the maps $s$ and $h$ induce isomorphisms between the first non-trivial homotopy groups of the $k$-th layers of the functors $\rbar(-, \R^n)$, $\Omega^\infty \Sigma^\infty \rbar(-, \R^n)$, and $\Omega^\infty\HZ\wedge \rbar(-, \R^n)$. 

When $r=3$, $k=2r-1=5$, the map $s$, and therefore also $h\circ s$ in diagram~\eqref{eq: expanded comparison}, induces an epimorphism on the first non-trivial homotopy group of the $k$-th layer. 
\end{thm}
\begin{rem}
The case $k=r, r+1$ of the last assertion of the theorem can be obtained by comparing our Theorem~\ref{T:Convergence} with the calculations done in~\cite{SSV:r-imm}.
\end{rem}
\begin{proof}
The assertion that $T_1\rImm(M, \R^n)\simeq \Imm(M, \R^n)$ follows from the fact that when $M=D^m$, the following maps are  equivalences~\cite{AS:rimmrconf}
\[
\Emb(D^m, \R^n)\xrightarrow{\simeq} \rImm(D^m, \R^n) \xrightarrow{\simeq} \Imm(D^m, \R^n),
\]
together with the fact that the functor $\Imm(-, \R^n)$ is linear, at least on manifolds whose handle dimension is less than $n$.

The assertion that the towers for $\rImm(-, \R^n)$ and for $\Sigma^\infty \rbar(-, \R^n)$ are both constant for $k<r$ follows from the fact that the derivatives of both functors vanish below degree $r$. Indeed, the $k$-th layer in the Taylor tower of $\rImm(M, \R^n)$ is determined by the following $k$-dimensional cubical diagram
\[
S\mapsto \rImm(\coprod_{\underline{k}\setminus S} D^m, \R^m).
\] 
By the result of~\cite{AS:rimmrconf}, this cubical diagram is equivalent to the diagram
\[
S\mapsto \mathrm{L}(\R^m, \R^n)^{\underline{k}\setminus S}\times \rConf({\underline{k}\setminus S}, \R^n).
\] 
Here $\mathrm{L}(\R^m, \R^n)$ is the space of injective linear maps from $\R^m$ to $\R^n$. This is the ``tangential data'' of an immersion. When $k>1$ the tangential data cancels out, and the last cube is as cartesian as the following cube
\begin{equation}\label{eq:unstablecube}
S\mapsto \rConf({\underline{k}\setminus S}, \R^n).
\end{equation}
On the other hand, the $k$-th layer in the Taylor tower of $ \Sigma^\infty \rbar(M, \R^n)$ is determined by the following $k$-dimensional cubical diagram
\begin{equation}\label{eq:stablecube}
S\mapsto \Omega^\infty\Sigma^\infty \rConf({\underline{k}\setminus S}, \R^n).
\end{equation}
When $k<r$, and $S\subseteq \underline{k}$ the space $\rConf({\underline{k}\setminus S}, \R^n)$ is contractible. Therefore for $1<k<r$ the cubes~\eqref{eq:unstablecube} and~\eqref{eq:stablecube} are cubes of contractible spaces. These cubes are homotopy cartesian for trivial reasons, and therefore the maps $T_k\rImm(M, \R^n)\to T_{k-1}\rImm(M, \R^n)$ and  $T_k \HZ\wedge \rbar(M, \R^n)\to T_{k-1} \HZ\wedge \rbar(M, \R^n)$ are equivalences for $1<k<r$. This proves that both towers are constant in the range $1\le k \le r-1$.

Now let us suppose that $r\le k \le 2r-1$. To prove the assertion about the connectivities of the maps in the two towers, we need to show that the cubical diagram~\eqref{eq:unstablecube} is as cartesian as the diagram~\eqref{eq:stablecube} in the indicated case. Furthermore, we want to prove that the map $s$ in~\eqref{eq: expanded comparison} induces an isomorphism/epimorphism on the first non-trivial homotopy groups of the total homotopy fibers in the appropriate cases.

The map $s$ induces the following map of cubical diagrams, indexed by the poset of subsets $S\subset \underline{k}$,
\begin{equation}\label{eq:mapofcubes}
\rConf({\underline{k}\setminus S}, \R^n)\xrightarrow{s} \Omega^\infty\Sigma^\infty \rConf({\underline{k}\setminus S}, \R^n).
\end{equation}
The spaces $\rConf({\underline{k}\setminus S}, \R^n)$ are $(r-1)n-2$-connected. By Freudenthal suspension theorem, the maps~\eqref{eq:mapofcubes} are $2(r-1)n-3$-connected. On the other hand, both cubes are retractive cubes by Lemma~\ref{lemma: config retractive}. It follows that the homotopy groups of the total homotopy fibers of both cubes are isomorphic to the total kernels of the corresponding cubes of homotopy groups. Proposition~\ref{prop: conf connectivity} tells us the connectivity of the total homotopy fiber of~\eqref{eq:stablecube}, and therefore also the connectivity of the total kernel of the corresponding cube of homotopy groups. If this connectivity is smaller than (resp. equals to) the connectivity of the maps in~\eqref{eq:mapofcubes}, then~\eqref{eq:mapofcubes} induces an isomorphism (resp: an epimorphism) between the first non-trivial homotopy groups of the total homotopy fibers. So we have to check that the range provided by Proposition~\ref{prop: conf connectivity} is smaller than (or equals to) $2(r-1)n-3$ in the cases indicated in the statement that we are trying to prove.

Suppose first that $r> n+1$. In this case, Proposition~\ref{prop: conf connectivity} says that~\eqref{eq:stablecube} is $k(n-1)+r-n-1$-cartesian. So we have to check that the inequality
\[
k(n-1)+r-n-1< 2(r-1)n-3
\]
holds whenever $k<2r$. Simplifying, we obtain the inequality
\[
k<\frac{(2n-1)r-3}{n-1} - 1.
\]
So it is enough to check the inequality
\[
2r\le \frac{(2n-1)r-3}{n-1} - 1.
\]
Multiplying by $n-1$ we obtain the inequality
\[
2r(n-1)\le (2n-1)r-3-n+1,
\]
which is equivalent to $r\ge n+2$, which is what we assumed.

Now suppose that $r\le n+1$. Then Proposition~\ref{prop: conf connectivity} says that~\eqref{eq:stablecube} is $k(n-1)+\left\lfloor\frac{k}{r}\right\rfloor(r-n-1)$-cartesian. So we have to check that the inequality
\[
k(n-1)+\left\lfloor\frac{k}{r}\right\rfloor(r-n-1)< 2(r-1)n-3
\]
holds when $r\le k < 2r$, with the exception that when $r=3$, $k=5$ it is in fact an equality. The reader can check that in this case we do indeed obtain the equality
\[
5(n-1)+\left\lfloor\frac{5}{3}\right\rfloor(3-n-1)=4n-3.
\]
In other cases, the assumption $r\le k < 2r$ implies $\left\lfloor\frac{k}{r}\right\rfloor=1$. 
So we have to check the inequality
\[
k(n-1)+r-n-1< 2(r-1)n-3.
\]
We can rewrite the inequality as follows
\[
k(n-1)< (2r-1)(n-1) + r - 3,
\]
or equivalently
\[
k< 2r-1 + \frac{r-3}{n-1}.
\]
For $r=3$ this inequality is equivalent to $k<5$. For $3<r \le n+1$, this holds for all $k\le 2r-1$, as stated.
\end{proof}
\end{section}

\vspace*{0.5cm}
\begin{section}{Further questions}\label{S:Further}
1. We gave conditions on the $m$ and $n$ that guarantee intrinsic convergence of the Taylor tower of $\HZ\wedge \rbar(M,\R^n)$. The next question is, what does the Taylor tower from Theorem \ref{T:Convergence} converge to? It is natural to guess that whenever the Taylor tower converges intrinsically, it actually converges to $\HZ\wedge \rbar(M,\R^n)$.

2. What can one say about the convergence of the Taylor tower for the unstable functor $\rImm(M,\R^n)$? The question of intrinsic convergence of the unstable tower might be tractable, and is a good place to start. One can use the methods of this paper to describe the layers of the functor $\Sigma^\infty \rbar(M, \R^n)$. Given this, one can try to analyse the layers of the functor $\rbar(M, \R^n)$ via the cobar construction \[\mathrm{cobar}(\Omega^\infty, \Sigma^\infty \Omega^\infty, \Sigma^\infty \rbar(M, \R^n)),\] in the style of~\cite{AC:OperadsChain}. It is conceivable that one can use these methods to obtain conditions on $m, n$, and $r$ that guarantee that the tower converges intrinsically.

Then there is a question of what the tower actually converges to. Once again, it seems reasonable to guess that whenever the Taylor tower of a ``natural'' functor converges intrinsically, then it actually converges to the functor.

3. What can one say about $r$-immersions into a general manifold $N$? In order to understand the layers of the tower of the functor $\Sigma^\infty \rbar(M, N)$ one needs to understand (the stable homotopy type of) the homotopy fiber of the map $\rConf(k, N)\to N^k$. For $r=2$ this homotopy fiber was analysed in~\cite{Arone:DerI}, and it seems likely that a similar analysis can be done for general $r$.

4. Construct interesting invariants/obstructions to existence of $r$-immersions, using the Taylor tower. In this paper we focused on situations where the connectivity of the $k$-th layer in the tower goes to infinity as $k$ goes to infinity. But cases when the connectivity does not go to infinity also can be interesting. Of particular potential interest are situations where the layers are all either $-1$-connected or $-2$-connected. In the former case, the bottom homotopy groups of the layers give invariants, in the latter case they give obstructions to existence.

For example, it follows from Theorem~\ref{T:Convergence} that when $n=m+1$ and $r=n+1$, then all the layers of $\HZ\wedge (n+1)\Imm(M, \R^n)$ are $-1$-connected. The $0$-th homotopy groups of the layers should give invariants of $r$-immersions. In the case $n=2$, and say $M=S^1$, $3\Imm(S^1, \R^2)$ is the space of smooth curves in $\R^2$ that do not have triple intersections. Spaces of such curves were studied quite intensely, starting with Arno{l'}d~\cite{Arnold:PlaneCurves, Tabachnikov:PlaneCurves, Shumakovich:Formulas}. In particular, Arno{l'}d developed the theory of finite type invariants for such curves. We expect these invariants to show up in the Taylor tower of  $\HZ\wedge 3\Imm(S^1, \R^2)$. In particular, we speculate that the first non-trivial layer of the tower, which by Theorem~\ref{thm: comparison} is the third layer, detects the ``Strangeness'' invariant, defined in~\cite{Arnold:PlaneCurves} and studied further in~\cite{Tabachnikov:PlaneCurves} and~\cite{Shumakovich:Formulas}.
\end{section}

\vspace*{0.5cm}

\bibliographystyle{alpha}


\pagestyle{plain}

\end{document}